\documentclass[12pt,twoside,final,a4paper]{amsart}
\usepackage{fullpage}
\usepackage{amsaddr}
\usepackage{amssymb}
\usepackage[all,cmtip]{xy}
\usepackage{cite}
\usepackage[utf8]{inputenc}
\usepackage{upgreek}
\usepackage{fix-cm}
\usepackage{graphicx}
\usepackage[T1]{fontenc}
%\usepackage{times}
%\usepackage[expert]{mathdesign}
%%%%% internal commands
\long\def\symbolfootnote[#1]#2{\begingroup%
\def\thefootnote{\fnsymbol{footnote}}\footnote[#1]{#2}\endgroup}

\newcommand{\tr}{\ensuremath{{}^T\!\!}}
\newcommand{\tra}{\ensuremath{{}^T}\!}
\newcommand{\C}{\mathfrak C}

\newcommand{\E}{\mathcal E}
\newcommand{\F}{\mathcal F}
\newcommand{\Aut}{\textup{Aut}}
\newcommand{\Hom}{\textup{Hom}}
\newcommand{\diag}{\textup{diag}}

% environments
\newtheorem{theorem}{Theorem}[section]
\newtheorem{lemma}[theorem]{Lemma}
\newtheorem{corollary}[theorem]{Corollary}

\newtheorem{definition}{Definition}[section]
\newtheorem*{theorem*}{Theorem}

\numberwithin{equation}{section}

\newcommand{\ignore}[1]{}

\newcommand{\mynote}[1]{}

%%%%%%%%%%%%%%%%%%to create diagonal dots other way%%%%%
\makeatletter
\def\Ddots{\mathinner{\mkern1mu\raise\p@
\vbox{\kern7\p@\hbox{.}}\mkern2mu
\raise4\p@\hbox{.}\mkern2mu\raise7\p@\hbox{.}\mkern1mu}}
\makeatother
%%%%%%%%%%%%%%%%%%%%%%%%%

%%%%
\begin{document}
\title[MOR cryptosystem and Chevalley groups]{MOR Cryptosystem and classical Chevalley groups in odd characteristic}
\author[Mahalanobis and Singh]{Ayan Mahalanobis and Anupam Singh} 
\address{IISER Pune, Dr.~Homi Bhabha Road, Pashan, Pune 411008, INDIA.} 
\email{ayan.mahalanobis@gmail.com}
\email{anupamk18@gmail.com}
\thanks{This work was supported by a SERB research grant.}

\subjclass[2010]{94A60, 20H30}
\today
\keywords{MOR cryptosystem, Chevalley groups, public-key cryptography}
%%%%%%%%%%%%%%%%%%%%%%%%%%%%

\begin{abstract}
In this paper we study the MOR cryptosystem with finite Chevalley groups. There are four infinite families of finite classical Chevalley groups. These are: special linear groups SL$(d,q)$, orthogonal groups O$(d,q)$ and symplectic groups Sp$(d,q)$. The family O$(d,q)$ splits to two different families of Chevalley groups depending on the parity of $d$. The MOR cryptosystem over SL$(d,q)$ was studied by the first author, ``A simple generalization of the ElGamal cryptosystem to non-abelian groups II, Communications in Algebra 40 (2012), no.~9, 3583–3596''. In that case, the hardness of the MOR cryptosystem was found to be equivalent to the discrete logarithm problem in $\mathbb{F}_{q^{d}}$. In this paper, we show that the MOR cryptosystem over Sp$(d,q)$ has the security of the discrete logarithm problem in $\mathbb{F}_{q^d}$. However, it seems likely that the security of the MOR cryptosystem for the family of orthogonal groups is $\mathbb{F}_{q^{d^2}}$. We also develop an analog of row-column operations in orthogonal and symplectic groups. 
\end{abstract}

\maketitle
\section{Introduction}
Public-key cryptography is a backbone of this modern society. However with \textbf{recent advances in the index-calculus algorithm to solve the discrete logarithm problem in finite fields of small characteristic} by Joux~\cite{joux,joux1}, and its possible implication to factoring algorithms, it seems that we are left with only one cryptographic primitive -- the discrete logarithm problem in the group of rational points of an elliptic curve over a finite field. So it seems prudent that we set out in search for new cryptographic primitives and subsequently new cryptosystems. The obvious question is: how to search and where to look? One can look into several well-known hard problems in Mathematics and hope to create a trap-door function or one can try to generalize the known, trusted cryptosystems.

This paper is in the direction of generalizing a known cryptosystem with the hope that something practical and useful will come out of this generalization. A new but arbitrary cryptosystem might not be considered by the community as a secure cryptosystem for decades. So our approach is conservative but practical.

The cryptosystem that we have in mind is \emph{the MOR cryptosystem}~\cite{crypto2001,Ma,Ma1}. It is a simple but powerful generalization of the well known and classic ElGamal cryptosystem. In this cryptosystem the discrete logarithm problem works in the automorphism group of a group instead of the group. As a matter of fact, it can work in the automorphism group of most algebraic structures. However, we will limit ourselves to finite groups. One way to look at the MOR cryptosystem is that it generalizes the discrete logarithm problem from a cyclic (sub)group to an arbitrary group.

The MOR cryptosystem over SL$(d,q)$ was studied earlier~\cite{Ma} and the work for finite $p$-groups is due to appear~\cite{Ma1}. It became clear that working with matrix groups of size $d$ over $\mathbb{F}_q$ and with automorphisms that act by conjugation, like the inner automorphism, there are two possible reductions of the security to finite fields. It is the security of the discrete logarithm problem in $\mathbb{F}_{q^d}$ or $\mathbb{F}_{q^{d^2}}$~\cite[Section 7]{Ma}.
This reduction is similar to the embedding of the discrete logarithm problem in the group of rational points of an elliptic curve to a finite field, the degree of the extension of that field over the field of definition of the elliptic curve is called the \emph{embedding degree}. In the case of SL$(d,q)$, it became the security of $\mathbb{F}_{q^{d}}$. The reason that we undertook this study, is to see, if the security in other classical Chevalley groups is $\mathbb{F}_{q^d}$ or $\mathbb{F}_{q^{d^2}}$. 

Though in cryptography it is often hard to come up with theorems about security of a cryptosystem, we were able to show that the attack that embeds the security of SL$(d,q)$ to a discrete logarithm problem in $\mathbb{F}_{q^d}$ works for symplectic groups as well. However, at this moment it seems likely that the security of the MOR cryptosystem in orthogonal groups O$(d,q)$ is $\mathbb{F}_{q^{d^2}}$. The way we implement this cryptosystem is by solving the word problem in generators. It presents no advantage to small characteristic. In the light of Joux's~\cite{joux} improvement of the index-calculus attack in small characteristic, this contribution of the MOR cryptosystem is remarkable.

\textbf{In summary}, the proposed MOR cryptosystem is totally different from the known ElGamal cryptosystems from a functional point of view. Its implementation depends on row-column operations and substitutions(substituting a matrix for a word in generators). However, we do have a concrete and tangible understanding of its security.
\subsection{Finite groups that we consider in this paper}
In this paper, we work with finite fields of odd characteristic. We work with classical Chevalley groups $\text{O}(2l+1,q), \text{Sp}(2l,q)$ and $\text{O}(2l,q)$ of $B_l, C_l$ and $D_l$ type respectively for $l\geq 2$. Note that the group $\text{SL}(l+1,q)$ of $A_l$ type has been dealt with earlier~\cite{Ma}. Our analysis and the subsequent conclusions hold for central (and subgroups of the center) quotients of the above mentioned groups and with any proper characteristic subgroups, like the commutator of the above groups. In this paper, we do not consider \emph{twisted classical Chevalley groups}, also called Steinberg groups. These are the ${}^2A_l(q)$ type which is the unitary group $\text{U}(l+1,q)$ and ${}^2D_l(q)$ type which is the orthogonal group $\text{O}^-(2l,q)$~\cite[Section 14.5]{ca}. We hope to continue our study with these groups in subsequent publications.

\subsection{Structure of the paper}
This paper is a study of the MOR cryptosystem using the orthogonal and symplectic groups over finite fields of odd characteristic.

In Section~\ref{des1}, we describe the MOR cryptosystem in some details. We emphasize that the MOR cryptosystem is a natural generalization of the classic ElGamal cryptosystem. In Section~\ref{des2}, we describe the orthogonal and symplectic groups and their automorphisms. In Section~\ref{wordproblem}, we describe two new algorithms. These algorithms use the row-column operations to write an element in the orthogonal or symplectic group as a word in generators. This is very similar to the row-column operations in special linear groups. These algorithms are useful in the implementation of the MOR cryptosystem. These algorithms are also of independent interest in computational group theory. We conclude this paper with some implementation details. 

\subsection{Notations and terminology}
It was bit hard for us to pick notations for this paper. The notations used by a Lie group theorist is somewhat different from that of a computational group theorist.We tried to preserve the essence of notations as much as possible. For example, a Lie group theorist will use SL$_{l+1}(q)$ to denote what we will denote by SL$(l+1,q)$ or SL$(d,q)$. We have used $\tr X$ to denote the transpose of the matrix $X$. This was necessary to avoid any confusion that might arise when using $X^{-1}$ and $\tr X$ simultaneously. In this paper, we use $K$ and $\mathbb{F}_q$ interchangeably, while each of them is a finite field of odd characteristic. All other notations used are standard.  
%%%%%%%%%%%%%%%%%%%%%%%%%%%%%%%%%%%%%%
\section{The MOR Cryptosystem}\label{des1}
The MOR cryptosystem is a natural generalization of the classic
ElGamal cryptosystem. It was first proposed by Paeng
et.~al.~\cite{crypto2001}. To elaborate the idea behind a MOR
cryptosystem we take a slightly expository route. For the purpose of this exposition, we define \textbf{the discrete logarithm problem}. It is one of the most common cryptographic primitive in use. It works in any cyclic (sub)group $G=\langle g\rangle$, but is not secure in any cyclic group.
\begin{definition}[The discrete logarithm problem]
The discrete logarithm problem in $G=\langle g\rangle $ is, given $g$ and $g^m$ find $m$.
\end{definition}
The word ``find'' in the above definition is bit vague, in this paper we mean compute $m$. The hardness to solve the discrete logarithm problem depends on the presentation of the group and is not an invariant under isomorphism. It is believed that the discrete logarithm problem is secure in the multiplicative group of a finite field and the group of rational points of an elliptic curve. The security in elliptic curves is considered much better than that of finite fields because of non-existence of sub-exponential algorithms in most cases of elliptic curves~\cite{joseph,balu}.

A more important cryptographic primitive, related to the discrete logarithm problem is the \textbf{Diffie-Hellman problem}, also known as the \textbf{computational Diffie-Hellman problem}. 
\begin{definition}[Diffie-Hellman problem]
Given $g$, $g^{m_1}$ and $g^{m_2}$ find $g^{m_1m_2}$.
\end{definition}
It is clear, if one solves the discrete logarithm problem then the
Diffie-Hellman problem is solved as well. The other direction is not known.

The most prolific cryptosystem in use today is the ElGamal cryptosystem. It uses the cyclic group $G=\langle g\rangle$.
It is defined as follows:
\subsection{The ElGamal cryptosystem}\leavevmode\newline
A cyclic group $G=\langle g\rangle$ is public.
\begin{itemize}
\item \textbf{Public-key:} Let $g$ and $g^m$ is public.
\item \textbf{Private-key:} The integer $m$ is private.
\end{itemize}
\textbf{Encryption:}\leavevmode\newline
To encrypt a plaintext $\mathfrak{M}\in G$, get an arbitrary integer $r\in[1,|G|]$ compute $g^r$ and $g^{rm}$. 
The ciphertext is $\left(g^r,\mathfrak{M}g^{rm}\right)$.\newline
\textbf{Decryption:}\leavevmode\newline
After receiving the ciphertext $\left(g^r,\mathfrak{M}g^{rm}\right)$, the user uses the private key $m$. So she computes $g^{mr}$ from $g^r$ and them computes $\mathfrak{M}$.  

It is well known that the hardness of the ElGamal cryptosystem is
equivalent to the Diffie-Hellman problem~\cite[Proposition 2.10]{Intro}.
\subsection{The MOR cryptosystem}
In the case of the MOR cryptosystem, one works with the automorphism
group of a group. An automorphism group can be defined on any
algebraic structure and subsequently a MOR cryptosystem can also be
defined on that automorphism group, however in this paper we restrict
ourselves to finite groups. Furthermore, we look at \emph{classical groups} defined by generators and automorphisms are defined as actions on those generators.

Let $G=\langle g_1,g_2,\ldots,g_s\rangle$ be a finite group. Let $\phi$ be a non-identity automorphism.
\begin{itemize}
\item \textbf{Public-key:} Let $\{\phi(g_i)\}_{i=1}^s$ and $\{\phi^m(g_i)\}_{i=1}^s$ is public.
\item \textbf{Private-key:} The integer $m$ is private.
\end{itemize}
\textbf{Encryption:}\leavevmode\newline
To encrypt a plaintext $\mathfrak{M}\in G$, get an arbitrary integer $r\in[1,|\phi|]$ compute $\phi^r$ and $\phi^{rm}$. 
The ciphertext is $\left(\phi^r,\phi^{rm}\left(\mathfrak{M}\right)\right)$.\newline
\textbf{Decryption:}\leavevmode\newline
After receiving the ciphertext $\left(\phi^r,\phi^{rm}\left(\mathfrak{M}\right)\right)$, the user knows the private key $m$. So she computes $\phi^{mr}$ from $\phi^r$ and then computes $\mathfrak{M}$.
\begin{theorem}
The hardness to break the above MOR cryptosystem is equivalent to the Diffie-Hellman problem in the group $\langle\phi\rangle$.
\end{theorem}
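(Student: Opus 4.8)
The plan is to establish the claimed equivalence by exhibiting polynomial-time reductions in both directions, exactly mirroring the classical argument for the ElGamal cryptosystem cited above as \cite[Proposition 2.10]{Intro}, with the cyclic group $G=\langle g\rangle$ there replaced by the cyclic group $\langle\phi\rangle$ of automorphisms, and with group elements replaced by automorphisms presented through their action on the generators $g_1,\ldots,g_s$. Throughout I would treat the basic operations in $\langle\phi\rangle$ as available: composing two automorphisms, inverting one, and applying one to a group element. The last of these is precisely what the word-problem algorithms of Section~\ref{wordproblem} supply in the concrete families we study, so the abstract equivalence below becomes effective once those algorithms are in place.

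First I would show that an oracle for the Diffie-Hellman problem in $\langle\phi\rangle$ suffices to break the cryptosystem. From the public key one reads off $\phi$ (from $\{\phi(g_i)\}$) and $\phi^m$ (from $\{\phi^m(g_i)\}$), and from the ciphertext one reads off $\phi^r$. Feeding the triple $(\phi,\phi^m,\phi^r)$ to the Diffie-Hellman oracle returns $\phi^{mr}$; inverting this automorphism and applying $\phi^{-mr}$ to the second ciphertext component $\phi^{rm}(\mathfrak{M})$ then recovers $\mathfrak{M}$, since $\phi^{-mr}$ composed with $\phi^{rm}$ is the identity. This is exactly the decryption step carried out without knowledge of the private key $m$.

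Conversely, I would show that an oracle $B$ breaking the cryptosystem, one that returns the plaintext given a public key and a ciphertext, solves the Diffie-Hellman problem. Given an instance $(\phi,\phi^{m_1},\phi^{m_2})$, set up the MOR public key $\{\phi(g_i)\}$ and $\{\phi^{m_1}(g_i)\}$, so that the unknown private key is $m_1$. For each generator $g_j$ present to $B$ the ciphertext $(\phi^{m_2},g_j)$, that is, take $r=m_2$ and declare the encrypted element to be $g_j$ itself. Because a legitimate second component equals $\phi^{rm_1}(\mathfrak{M})=\phi^{m_1m_2}(\mathfrak{M})$, the oracle returns $\mathfrak{M}_j=\phi^{-m_1m_2}(g_j)$. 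Collecting these for $j=1,\ldots,s$ determines the automorphism $\phi^{-m_1m_2}$ through its action on the generating set, and its inverse is the sought value $\phi^{m_1m_2}$.

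I expect the only genuine subtlety to be bookkeeping about representation rather than mathematics, since the underlying argument is the standard ElGamal reduction. The ``elements'' of $\langle\phi\rangle$ are automorphisms encoded by their images on the generating set, so in the converse direction recovering $\phi^{m_1m_2}$ costs $s$ calls to $B$, one per generator, rather than a single call; and in the forward direction applying $\phi^{-mr}$ to a group element presupposes that such elements can be written as words in $g_1,\ldots,g_s$. Both costs are polynomial in $s$, so the reductions remain efficient in each direction and the stated equivalence follows.
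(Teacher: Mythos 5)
Your proposal is correct and follows essentially the same argument as the paper: the forward direction uses the Diffie--Hellman oracle to obtain $\phi^{mr}$ and decrypt, and the converse queries the decryption oracle once per generator $g_i$ with ciphertext $(\phi^{m_2},g_i)$ to recover $\phi^{-m_1m_2}$ on the generating set and hence $\phi^{m_1m_2}$. The only difference is that you make the representational bookkeeping (the $s$ oracle calls, the reliance on the word-problem algorithms) explicit, which the paper leaves implicit.
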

\begin{proof}
It is easy to see that if one can break the Diffie-Hellman problem then one can compute $\phi^{mr}$ from $\phi^m$ in the public-key and $\phi^r$ in the ciphertext. This breaks the system.

On the other hand, observe that the plaintext is $\phi^{-mr}\left(\phi^{mr}(\mathfrak{M})\right)$. Assume that there is an oracle that can break the MOR cryptosystem, i.e., given $\phi,\phi^m$ and a plaintext $\left(\phi^r,g\right)$ will deliver $\phi^{-mr}(g)$. Now we query the oracle $s$ times with the public-key and the ciphertext $\left(\phi^r,g_i\right)$ for $i=1,2\ldots,s$. From the output one can easily find $\phi^{mr}(g_i)$ for $i=1,2,\ldots,s$. So we just witnessed that for $\phi^m$ and $\phi^r$ one can compute $\phi^{mr}$ using the oracle. This solves the Diffie-Hellman problem. 
\end{proof}
In a practical implementation of a MOR cryptosystem there are two things that matter the most.
\begin{description}
\item[a] The number of generators. As we saw that the automorphism $\phi$ is presented as action on generators. Larger the number of generators bigger is the public-key. 
\item[b] Efficient algorithm to solve the word problem. This means, given $G=\langle g_1,g_2,\ldots,g_s\rangle$ and $g\in G$, is there an efficient algorithm to write $g$ as word in $g_1,g_2,\ldots,g_s$? The reason of this importance is immediate -- the automorphisms are presented as action on generators and if one has to compute $\phi(g)$, then the word problem must be solved.
\end{description}
The obvious question is: what are the right groups for the MOR cryptosystem? In this paper, we pursue a study of the MOR cryptosystem using \textbf{finite Chevalley groups} of classical type, in particular, orthogonal and symplectic groups. 

%%%%%%%%%%%%%%%%%%%%%%%%%%%%%%%%%%%%%%%%%%

\section{Classical Groups}\label{des2}

In this section, we produce a brief overview of the Chevalley groups of classical type.We introduce orthogonal and symplectic groups. References for this section are Carter~\cite{ca} and Grove~\cite{gr}. We also briefly describe similitude groups which are required for a study of diagonal automorphisms of the Chevalley groups. In this section we fix notation which will be used throughout this paper.

Let $V$ be a vector space of dimension $d$ over a field $K$ of odd characteristic. Let $\beta\colon V\times V \rightarrow K$ be a bilinear form. By fixing a basis of $V$ we can associate a matrix to $\beta$. We shall abuse the notation slightly and denote the matrix of the bilinear form by $\beta$ itself. Thus $\beta(x,y)=\tr x\beta y$ where $x,y$ are column vectors. We will work with non-degenerate bilinear forms and that means $\det\beta\neq 0$. A symmetric or skew-symmetric bilinear form $\beta$ satisfies $\beta=\tr\beta$ or $\beta=-\tr\beta$ respectively.  
\begin{definition}[Orthogonal Groups]
A square matrix $X$ of size $d$ is called orthogonal if $\tr X\beta X=\beta$ where $\beta$ is symmetric. It is well known that the orthogonal matrices form a group known as the orthogonal group.
\end{definition}
\begin{definition}[Symplectic Group]
A square matrix of size $d$ is called symplectic if $\tr X\beta X=\beta$ where $\beta$ is skew-symmetric. And the set of symplectic matrices form symplectic group.
\end{definition}
We write the dimension of $V$ as $d=2l+1$ or $d=2l$ for $l\geq 1$. 
We fix a basis and index it by $0,1,2,\ldots,l,-1,-2,\ldots,-l$ for odd dimension and by $1,2,\ldots,l,-1,-2,\ldots,-l$ for even dimension. We consider the non-degenerate bilinear
forms $\beta$ on $V$ given by the following matrices: 
\begin{itemize}\label{forms}
\item Type $B_l$: The form $\beta$ is symmetric with $d=2l+1$ and $\beta=\begin{pmatrix}2&0&0\\ 0&0&I_l\\ 0&I_l&0\end{pmatrix}$.
\item Type $C_l$: The form $\beta$ is skew-symmetric with $d=2l$ and  $\beta=\begin{pmatrix}0&I_l\\ -I_l&0\end{pmatrix}$.
\item Type $D_l$: The form $\beta$ is symmetric with $d=2l$ and  $\beta=\begin{pmatrix}0&I_l\\ I_l&0\end{pmatrix}$.
\end{itemize}
where $I_l$ is the identity matrix of size $l$ over $K$. 

Let $K(=\mathbb{F}_q)$ be a finite field of odd characteristic. If $d$ is odd there is only one orthogonal group up to conjugation~\cite[Page79]{gr} and thus we can fix $\beta$ as above of $B_l$ type. In this case the orthogonal group is simply denoted by $\text{O}(2l+1,q)$. 
Up to equivalence there is only one non-degenerate skew-symmetric form in even dimension~\cite[Theorem 2.10]{gr}. We fix $\beta$ of $C_l$ type as above. Thus there is only one symplectic group up to conjugation denoted by $\text{Sp}(2l,q)$.
However up to conjugation there are two different orthogonal groups~\cite[Page 79]{gr} in even dimension $d=2l$. In this paper, we work with only one of them corresponding to the $\beta$ fixed as above of type $D_l$. We denote this orthogonal group by $\text{O}(2l,q)$. The other orthogonal group often denoted as $\text{O}^-(2l,q)$ is twisted Chevalley group denoted as ${}^{2}D_l(q)$, also called Steinberg groups. 
\begin{definition}[Orthogonal similitude groups]
The orthogonal similitude group is defined as the set of matrices $X$ of size $d$ as follows: $\text{GO}(d,q)=\{X\in \text{GL}(d,q)\mid \tr X\beta X=\mu \beta, \mu\in\mathbb{F}_q^\times\}$ where $d=2l+1$ or $2l$ and $\beta$ is of type $B_l$ and $D_l$ respectively. 
\end{definition}
\begin{definition}[Symplectic similitude group]
The symplectic similitude group is denoted by $\text{GSp}(2l,q)=\{X\in \text{GL}(2l,K)\mid \tr X \beta X=\mu \beta, \mu\in \mathbb{F}_q^\times\}$
where $\beta$ is of type $C_l$. 
\end{definition}
Here $\mu$ depends on the matrix $X$ and is called the similitude factor. 
The similitude factor $\mu$ defines a group homomorphism from the similitude group to $\mathbb{F}_q^\times$ and the kernel is the orthogonal group $\text{O}(d,q)$ when $\beta$ is symmetric and symplectic group $\text{Sp}(2l,q)$ when $\beta$ is skew-symmetric respectively~\cite[Section 12]{kmr}. 
Note that scalar matrices $\lambda I$ for $\lambda \in \mathbb{F}_q^\times$ belong to the center of similitude groups. The similitude groups are thought of analog of what $\text{GL}(d,q)$ is for $\text{SL}(d,q)$. For a discussion of the diagonal automorphisms of Chevalley groups we need the diagonal subgroups of the similitude groups. 
\begin{definition}[Diagonal group]\label{diagonalgroup} 
The diagonal groups are defined to be the group of non-singular diagonal matrices in the corresponding similitude group and are as follows:
in the case of $\text{GO}(2l+1,q)$ it is 
$$\{\diag(\alpha, \lambda_1,\cdots,\lambda_l,\mu\lambda_1^{-1},\cdots,\mu\lambda_l^{-1})\mid \lambda_1,\ldots,\lambda_l, \alpha^2=\mu\in \mathbb{F}_q^\times\}$$
and in the case of $\text{GO}(2l,q)$ and $\text{GSp}(2l,q)$ it is 
$$\{\diag(\lambda_1,\cdots,\lambda_l,\mu\lambda_1^{-1},\cdots,\mu\lambda_l^{-1})\mid \lambda_1,\ldots,\lambda_l,\mu\in \mathbb{F}_q^\times\}.$$
\end{definition}
Conjugation by these diagonal elements produce diagonal automorphisms in the respective Chevalley groups. 

We denote by $\Omega_d(q)$ the commutator subgroup of the orthogonal group O$(d,q)$. It is  a index 2 subgroup of the special orthogonal group $\text{SO}(d,q)$. We fix a generator of $\text{SO}(d,q)/\Omega_d(q)$ as  $d(\zeta)=\diag(1,1,\ldots,1,\zeta,1,\ldots,1,\zeta^{-1})$ where $\zeta$ is a fixed non-square in $\mathbb{F}_q$~\cite[Theorem 9.7]{gr}. Further, the group $\text{SO}(d,q)$ is of index $2$ in $\text{O}(d,q)$ and we fix a generator for the quotient as $w_l=I-e_{l,l}-e_{-l,-l}-e_{l,-l}-e_{-l,l}$ where $e_{i,j}$ denotes a matrix with $1$ at $(i,j)\textsuperscript{th}$ place and $0$ everywhere else.

\subsection{Chevalley Generators}\label{chgenerators}

To work with Chevalley groups we need a set of generators for these groups. We describe the Chevalley generators from the theory of Chevalley groups~\cite{ca}. For sake of completeness of this paper, we will briefly go through the theory of Chevalley groups in the next section. In what follows $t$ varies over $\mathbb{F}_q$.
\begin{enumerate}
\item The group $\text{SL}(l+1,q)$ is generated by the matrices $x_{i,j}(t)=I+te_{i,j}$ where $1\leq i\neq j\leq l+1$. This is Chevalley group of $A_l$ type. 
\item For $1\leq i,j \leq l$, the group $\Omega_{2l+1}(q)$ is generated by the following matrices: 
\begin{eqnarray*}
x_{i,j}(t)=&I+t(e_{i,j}-e_{-j,-i})& \text{for}\; i\neq j,\\
x_{i,-j}(t)=&I+t(e_{i,-j}-e_{j,-i}),&\text{for}\; i<j,\\
x_{-i,j}(t)=&I+t(e_{-i,j}-e_{-j,i})&\text{for}\; i<j ,\\ x_{i,0}(t)=&I+t(2e_{i,0}-e_{0,-i})-t^2e_{i,-i},&\\
x_{0,i}(t)=&I+t(-2e_{-i,0}+e_{0,i})-t^2e_{-i,i}.&
\end{eqnarray*}
\noindent With these generators the elements $d(\zeta)=\diag(1,\underbrace{1,\cdots,1,\zeta}_l, \underbrace{1,\cdots,1,\zeta^{-1}}_l)$ and $w_l=I-e_{l,l}-e_{-l,-l}-e_{l,-l}-e_{-l,l}$ generate the orthogonal group $\text{O}(2l+1,q)$. This is Chevalley group of $B_l$ type.
\item For $1\leq i,j\leq l$, the group $\text{Sp}(2l,q)$ is generated by the matrices 
\begin{eqnarray*}
x_{i,j}(t)=&I+t(e_{i,j}-e_{-j,-i}) &\text{for}\; i\neq j,\\ 
x_{i,-j}(t)=&I+t(e_{i,-j}+e_{j,-i})&\text{for}\; i<j,\\ 
x_{-i,j}(t)=&I+t(e_{-i,j}+e_{-j,i})& \text{for}\; i<j,\\
x_{i,-i}(t)=&I+te_{i,-i}&\\
x_{-i,i}(t)=&I+te_{-i,i}.&
\end{eqnarray*}
  This is Chevalley group of $C_l$ type.
\item For $1\leq i,j\leq l$, the group $\Omega_{2l}(q)$ is generated by the matrices
\begin{eqnarray*}
x_{i,j}(t)=&I+t(e_{i,j}-e_{-j,-i}) &\text{for}\; i\neq j,\\
x_{i,-j}(t)=&I+t(e_{i,-j}-e_{j,-i}) &\text{for}\;i<j,\\
x_{-i,j}(t)=&I+t(e_{-i,j}-e_{-j,i})&\text{for}\; i<j.
\end{eqnarray*}
With the above generators the elements $d(\zeta)=\diag(\underbrace{1,\ldots,1,\zeta}_l,\underbrace{1,\ldots,1,\zeta^{-1}}_l)$ and $w_l=I-e_{l,l}-e_{-l,-l}-e_{l,-l}-e_{-l,l}$ generate the orthogonal group $\text{O}(2l,q)$.  This is Chevalley group of $D_l$ type.
\end{enumerate}
It is interesting to note that our algorithm in Section~\ref{wordproblem} to solve the word problem in Chevalley groups using the above generators gives yet another proof that the matrices listed above generate the corresponding groups. 

%%%%%%%%%%%%%%%%%%%%%%%

\section{Adjoint Chevalley Groups}\label{chevalleygroups}
In this section we introduce adjoint Chevalley groups. One could get around without reading this section, we include this to explain why the generators listed in Section~\ref{chgenerators} are natural. The material is not that important to understand the later part of this paper. It is probably impossible to produce a brief and comprehensive introduction to Chevalley groups. The usual route to describe a Chevalley group is as a subgroup of the automorphism group of a \emph{simple Lie algebra}. A Lie algebra over a field is a finite dimensional vector space with a Lie bracket operation. We are particularly interested in simple Lie algebras over $\mathbb{C}$. 
The theory was originally developed by Chevalley~\cite{ch}. Though our exposition follows Carter~\cite{ca} and Steinberg~\cite{st2}. A more general account of this theory over commutative rings can be found in Vavilov~\cite{va}. 

We are particularly interested in simple Lie algebras over $\mathbb{C}$. One dimensional Lie algebras are always simple and uninteresting. It is known that a Lie algebra contains a self-normalizing nilpotent subalgebra $\mathcal{H}$ called the Cartan subalgebra. In case of simple Lie algebras the Cartan subalgebra contains only semi-simple elements. Corresponding to a Cartan subalgebra $\mathcal{H}$, we can define a decomposition of the simple Lie algebra $\mathcal{L}$ by looking at the simultaneous decomposition as eigen-spaces. So we can write $\mathcal{L}=\mathcal{H}\bigoplus\oplus_{r\in \Phi}\mathcal{L}_r$ where $\mathcal{L}_r$ are one dimensional subspaces of $\mathcal{L}$ satisfying $[h,e_r]=r(h)e_r$ where $r\colon \mathcal{H}\rightarrow\mathbb{C}$, $e_r$ is the generator of $\mathcal{L}_r$ and $\Phi$ is finite subset of $\mathcal{H}^\ast$, the dual of $\mathcal{H}$. This is called a Cartan decomposition of $\mathcal L$~\cite[Section 3.2]{ca}.

The set $\Phi$ obtained using Cartan decomposition is the  root system for the Lie algebra $\mathcal{L}$. An abstract root system~\cite[Definition 2.1.1]{ca} is a finite subset of an Euclidean space of the same dimension as $\mathcal H$. By fixing an order in the Euclidean space we get a system of positive roots $\Phi^+$ and negative roots $\Phi^{-}$ so that $\Phi=\Phi^+\cup \Phi^-$. Let $\Pi=\{p_1,\ldots,p_l\}$ be a system of \emph{simple roots}, i.e., any root is either non-positive or non-negative integer linear combination of simple roots. We denote by
$h_r=2r/(r,r)$ the co-root corresponding to the root $r$, where $(.\,,.)$ is the usual inner-product on the Euclidean space containing the roots. It is a theorem of Chevalley that there is a basis 
$\{h_r,r\in \Pi; e_r,r\in\Phi\}$ of $\mathcal L$  satisfying the following~\cite[Theorem 4.2.1]{ca}:
\begin{enumerate}
\item[] $[e_r,e_{-r}]=h_r$,
\item[]  $[e_r,e_s]=0$ if $r+s\not\in \Phi$ else $[e_r,e_s]=\pm(p+1)e_{r+s}$ where $r\neq \pm s$,
\item[] $[h_r,h_s]=0$,
\item[] $[h_r,e_s]=A_{rs}e_s$,
\end{enumerate}
where $-pr+s,\ldots,-r+s, s, r+s,\ldots, qr+s$ is a $r$-chain passing through $s$ and
$A_{rs}=\frac{2(r,s)}{(r,r)}$ are integers known as the \emph{Cartan integers}. Such a basis is called a \emph{Chevalley basis}~\cite[Section 4.2]{ca}. 

There is a well-known classification of finite dimensional simple Lie algebras over $\mathbb C$~\cite[Section 3.6]{ca}. They are classified via their Dynkin diagram. There are four infinite families $A_l (l\geq 1)$, $B_l (l\geq 2)$, $C_l (l\geq 3)$ and $D_l (l\geq 4)$ together called simple Lie algebras of "classical type" and five "exceptional types" $G_2, F_4, E_6, E_7$ and $E_8$. In Section~\ref{classicaltype} we explicitly describe the classical Lie algebras and their  Chevalley basis which will be used to form adjoint Chevalley groups. From now on $\mathcal L$ is one of $A_l, B_l, C_l$ or $D_l$.

Let $K(=\mathbb{F}_q)$ be a finite field of odd characteristic. We denote by $\mathcal L_{\mathbb Z}$  the $\mathbb Z$-span of a Chevalley basis in $\mathcal L$. 
Clearly $\mathcal L_{\mathbb Z}$ is a Lie
algebra over $\mathbb Z$. Define $\mathcal L_K = K\otimes \mathcal L_{\mathbb Z}$. 
Then one can define a  Lie algebra structure on $\mathcal L_K$ as follows: 
$$[1\otimes x, 1\otimes y]:=1\otimes[x,y]$$
for basis elements $x,y$ and extended by linearity. Thus $\mathcal L_K$ is a Lie algebra over $K$.

To define the groups of our interest we need to work with certain operators which are in $\Aut(\mathcal L_K)$. 
For this we start by defining $x_r(\zeta):=\exp(\zeta \textrm{ad}(e_r)) \in \Aut(\mathcal L)$ for $r\in \Phi$ and $\zeta \in \mathbb C$. Where ad is the Lie algebra homomorphism $\text{ad}\colon \mathcal{L}\rightarrow \text{End}\left(\mathcal{L}\right)$ given by $\text{ad}(x).y=[x,y]$. 
These operators $x_r$ are unipotent operators whose matrix entries are polynomials in $\zeta$ with integer coefficients. Thus by substituting $t\in K$ for the variable $\zeta$ and reducing the coefficients modulo the characteristic of the field $K$, we get operators $x_r(t)\in \Aut(\mathcal L_K)$. 
The {\it adjoint Chevalley group of type $\mathcal L$ over $K$} is the subgroup of $\Aut(\mathcal L_K)$ generated by $x_r(t)$ for all 
$r\in \Phi$ and $t\in K$, and is denoted by
\[\mathcal L(K):=\left\langle x_r(t) \mid r\in\Phi, t\in K \right\rangle.\]
One can explicitly write down $x_r(t)$ as an automorphism of $\mathcal L_K$ on the basis elements as follows:
\begin{enumerate}
\item[] $x_r(t).e_r=e_r$,
\item[] $x_r(t).e_{-r}= e_{-r}+th_r-t^2e_r$,
\item[] $x_r(t).h_s=h_s - A_{sr}te_r$ for $s\in \Pi$,
\item[] $x_r(t).e_s = \sum_{i=0}^q M_{r,s,i}t^ie_{i,r+s}$ if $r\neq \pm s$ 
\end{enumerate}
where $M_{r,s,i}=\pm {p+i\choose i}$ and $r,s\in \Phi$. 
In this paper we are working with classical Chevalley groups which are explicitly described in Section~\ref{classicaltype}. 

For a fixed $r$, the subgroup $X_r$ generated by elements $x_r(t)$ for all $t\in K$, 
is called a root subgroup and is isomorphic to the additive group of $K$.
Let $U:=\left\langle X_r \mid r\in \Phi^+\right\rangle$ and $V:=\left\langle X_r \mid r\in \Phi^-\right\rangle$ be subgroups 
of $\mathcal{L}(K)$. Then both $U$ and $V$ are unipotent as well as nilpotent groups. Furthermore these are
Sylow $p$-subgroups of $\mathcal{L}(K)$.  For every $r\in \Phi$ there is a surjective homomorphism~\cite[Theorem 6.3.1]{ca} 
$\phi_r\colon \textrm{SL}(2,K) \rightarrow \left\langle X_r, X_{-r}\right\rangle$ which maps 
$\begin{pmatrix} 1 & t \\ 0 & 1\end{pmatrix}$ to $x_r(t)$ and
$\begin{pmatrix} 1 & 0 \\ t & 1\end{pmatrix}$ to $x_{-r}(t)$.
Let us define $h_r(\lambda)$ as $\phi_r \begin{pmatrix} \lambda & 0 \\ 0 & \lambda^{-1}\end{pmatrix} $ and 
$n_r(t)$ as $\phi_r \begin{pmatrix} 0 & t \\ -t^{-1} & 0\end{pmatrix}$.
Set $n_r=n_r(1)$ for convenience. We now define some important subgroups $H:=\left\langle h_r(t) \mid r\in \Phi, t \in K^\times\right\rangle$ and $N:=\left\langle H, n_r \mid r\in \Phi\right\rangle$.

For our discussion of diagonal automorphisms we need a slightly larger group.
Let $P=\mathbb Z\Phi$ be the root lattice and $Q$ be the weight lattice ($\mathbb Z$ span of the dual of co-roots)~\cite[Section 7.1]{ca}. We know that $P\subset Q$. It is known that: 
\begin{center}\label{lattice}
\begin{tabular}{c|c}
\hline
Simple Lie Algebra & $Q/P$\\
\hline
 $A_l$ & $\mathbb Z/(l+1)\mathbb Z$ \\
 $B_l, C_l$ &   $\mathbb Z/2\mathbb Z$ \\
 $D_l$ & $\mathbb Z/4\mathbb Z$ if $l$ odd\\
 & $\mathbb Z/2\mathbb Z \times \mathbb Z/2\mathbb Z$ if $l$ even \\
 \hline 
\end{tabular}
\end{center}
There is a well known isomorphism~\cite[Section 7.1]{ca} $h\colon \Hom(P, K^\times) \rightarrow \hat H\subset \Aut(\mathcal L_K)$ 
given by $\chi\mapsto h(\chi)$ where $h(\chi).h_s=h_s$ and $h(\chi).e_s=\chi(s)e_s$. Furthermore
$H\subset \hat H$ and $h(\chi)\in H$ if the character $\chi$ can be extended to a character
of $Q$. The group $\hat H$ normalizes $U$ and $V$ and hence $\mathcal L(K)$ (refer to the note following~\cite[Theorem 7.1.1]{ca}). 
Further $\mathcal L(K) \cap \hat H =H$. Let
$\hat G \subset \Aut(\mathcal L_K)$ be the subgroup generated by $\mathcal L(K)$ and $\hat H$. Then
$\mathcal L(K)$ is a normal subgroup of $\hat G$ and $\hat G/\mathcal L(K) \cong \hat H/H$.

We are working with Chevalley groups of classical type which we now describe explicitly.  
%%%%%%%%%%%%%%%%%%%%%%%%%%%%%%%%%%%%%
\subsection{Chevalley Groups of Classical types}\label{classicaltype}
In this section, we describe the Chevalley groups of classical type following Carter ~\cite[Section 11.2 and 11.3]{ca}.
In each case, we first describe the complex simple Lie algebra. These are subalgebra of the full matrix algebra $M(d,\mathbb C)$, square matrices of size $d$ over $\mathbb{C}$, with bracket operation $[X,Y]=XY-YX$. Then we get a Chevalley basis as described earlier and a Lie algebra over $\mathbb Z$ and hence over any field $K$ by a base change. Using this we describe the root generators $x_r(t)$ of the adjoint Chevalley group $\mathcal L(K)$. 
It turns out that the operators $x_r(t)$ are inner conjugation automorphism~\cite[Lemma 4.5.1]{ca} on $\mathcal L_K$ by $\exp(te_r)$ which generate an intermediate Chevalley group denoted as $\bar G$. The group $\bar G$ is close to groups of our interest. In later section, we will abuse the notation slightly and denote the generators of $\bar G$ as $x_r(t)$ (for example in the Section~\ref{chgenerators}).
We make a table before we describe them explicitly.
\begin{center}
\begin{tabular}{c|c|c|c}
\hline\\
 Type & Group of our interest & $\bar G$ & $\mathcal L(K)$ \\
\hline
 $A_l$ & $\text{SL}(l+1,K)$ &$\text{SL}(l+1,K)$&$\text{PSL}(l+1,K)$ \\
 $B_l$ &   $\text{O}(2l+1,K)$ & $\Omega_{2l+1}(K)$ & $P\Omega_{2l+1}(K)$\\
 $C_l$ & $\text{Sp}(2l,K)$ & $\text{Sp}(2l,K)$& $\text{PSp}(2l,K)$ \\
 $D_l$ &   $\text{O}(2l,K)$ & $\Omega_{2l}(K)$ & $P\Omega_{2l}(K)$\\
 \hline 
\end{tabular}
\end{center}

{\bf Type $A_l$ :}   The $A_l$ type complex Lie algebra is $sl_{l+1}(\mathbb C)$ consisting of trace $0$ matrices of size $l+1$. The set of all diagonal matrices in $sl_{l+1}(\mathbb C)$ give a Cartan subalgebra and that Cartan decomposition gives a Chevalley basis. The roots (eigen-vectors for non-zero eigen-values) which are part of Chevalley basis is given by $\Phi=\{e_{i,j}\mid 1\leq i\neq j\leq l+1\}$. We fix a simple root system $\Pi=\{e_{i,i+1} \mid 1\leq i\leq l\}$. A Chevalley basis is obtained by taking union of $\Phi$ with the set $\{[e_{i,i+1},e_{i+1,i}]\mid i\leq i\leq l\}$.

Thus the generators for the intermediate Chevalley group of type $A_l$ over field $K$ are $x_{i,j}(t)=I+te_{i,j}$ where $i\neq j$ and $t\in K$. Hence $\bar G = \text{SL}(l+1,K)$ and the adjoint group is $A_l(K)\cong \text{PSL}(l+1,K)$. 

{\bf Type $B_l$ :}  The $B_l$ type complex Lie algebra is $o_{2l+1}(\mathbb C)=\{X\in M(2l+1,\mathbb C)\mid \tra X\beta +\beta X=0\}$ where $\beta$ is as in the Section~\ref{forms}. Any $X\in o_{2l+1}(\mathbb C)$ is of the form $\begin{pmatrix} 0& X_{01} & X_{02} \\ -2\tra X_{02} & X_{11} & X_{12} \\-2\tra X_{01} & X_{21}& -\tra X_{11} \end{pmatrix} $ where $X_{12}$
and $X_{21}$ are skew-symmetric matrices of size $l\times l$. 
The set of diagonal matrices give a Cartan subalgebra and the Cartan decomposition gives us a Chevalley basis. Thus the roots in this case are $\Phi=\{e_{i,j}-e_{-j,-i}, -e_{-i,-j}+e_{j,i},
e_{i,-j}-e_{j,-i}, e_{-i,j}-e_{-j,i}, 2e_{i0}-e_{0,-i}, -2e_{-i,0}+e_{0,i} \mid 1\leq i <j \leq l\}$. 
The simple roots are $\Pi=\{e_{i,i+1}-e_{-(i+1),-i}, 2e_{l,0}-e_{0,-l} \mid 1\leq i\leq l-1\}$.

In this case the intermediate Chevalley group is $\Omega_{2l+1}(K)$ generated by the Chevalley generators: For $1\leq i,j\leq l$, 
\begin{eqnarray*}
x_{i,j}(t)=& I+t(e_{i,j}-e_{-j,-i})&\text{for}\; i\neq j,\\ 
x_{i,-j}(t)=&I+t(e_{i,-j}-e_{j,-i}) & \text{for}\; i<j,\\ 
x_{-i,j}(t)=&I+t(-e_{-i,j}+e_{-j,i})& \text{for}\; i<j,\\
x_{i,0}(t)=&I+t(2e_{i,0}-e_{0,-i})-t^2e_{i,-i},\\ x_{0,i}(t)=&I+t(-2e_{-i,0}+e_{0,i})-t^2e_{-i,i}.
\end{eqnarray*} 
The adjoint group is  $B_l(K) \cong P\Omega_{2l+1}(K)$.

{\bf Type $C_l$ :}  The complex Lie algebra of type $C_l$ is $sp_{2l}(\mathbb C)=\{X\in M(2l,\mathbb C)\mid \tra X\beta +\beta X=0\}$ where $\beta $ is as in Section~\ref{forms}. Any $X\in sp_{2l}(\mathbb C)$ is of the form $\begin{pmatrix} X_{11} & X_{12} \\ X_{21} & -\tra X_{11}\end{pmatrix} $ where $X_{12}$
and $X_{21}$ are symmetric matrices. 
The set of diagonal matrices is a Cartan subalgebra and the Cartan decomposition gives a Chevalley basis. The roots in this case are
 $\{e_{i,j}-e_{-j,-i}, -e_{-i,-j}+e_{j,i},
e_{i,-j}+e_{j,-i}, e_{-i,j}+e_{-j,i}, e_{i,-i}, e_{-i,i} \mid 1\leq i <j \leq l\}$. 
The simple roots are
$\Pi=\{e_{i,(i+1)}-e_{-(i+1),-i}, e_{l,-l} \mid 1\leq i\leq l-1\}$. 

The root generators for the group over a field $K$ are: For $1\leq i,j\leq l$  
\begin{eqnarray*}
x_{i,j}(t)=&I+t(e_{i,j}-e_{-j,-i})&\text{for}\; i\neq j,\\ x_{i,-j}(t)=&I+t(e_{i,-j}+e_{j,-i}) & \text{for}\; i<j,\\
x_{-i,j}(t)=&I+t(e_{-i,j}+e_{-j,i})& \text{for}\; i<j,\\ 
x_{i,-i}(t)=&I+te_{i,-i},&\\
x_{-i,i}(t)=&I+te_{-i,i}& 
\end{eqnarray*}
which generate the intermediate Chevalley group $\text{Sp}(2l,K)$. The adjoint group is $C_l(K) \cong\text{PSp}(2l,K)$.

{\bf Type $D_l$ :} The $D_l$ type complex Lie algebra is $o_{2l}(\mathbb C)=\{X\in M_{2l}(\mathbb C)\mid \tra X\beta +\beta X=0\}$ where $\beta$ is
as in Section ~\ref{forms}. Any $X\in o_{2l}(\mathbb C)$ is of the form 
$\begin{pmatrix} X_{11} & X_{12} \\ X_{21} & -\tra X_{11}\end{pmatrix} $ where $X_{12}$
and $X_{21}$ are skew-symmetric matrices. The set of diagonal matrices form a Cartan subalgebra. The roots in this case are $\{e_{i,j}-e_{-j,-i}, -e_{-i,-j}+e_{j,i},
e_{i,-j}-e_{j,-i}, e_{-i,j}-e_{-j,i} \mid 1\leq i <j \leq l\}$. The simple roots are
$\Pi=\{e_{i,(i+1)}-e_{-(i+1),-i}, p_l=e_{(l-1),-l}-e_{l,-(l-1)} \mid 1\leq i\leq l-1\}$. This gives us a Chevalley basis.

The intermediate Chevalley group in this case is $\Omega_{2l}(K)$ generated by the Chevalley generators: For $1\leq i,j\leq l$ 
\begin{eqnarray*}
x_{i,j}(t)=&I+t(e_{i,j}-e_{-j,-i})& \text{for}\; i\neq j,\\ x_{i,-j}(t)=&I+t(e_{i,-j}-e_{j,-i})&\text{for}\; i<j, \\
x_{-i,j}(t)=&I+t(e_{-i,j}-e_{-j,i})& \text{for}\; i<j. 
\end{eqnarray*}
The adjoint group is $D_l(K) \cong P\Omega_{2l}(K)$.

%%%%%%%%%%%%%%%%%%%%%%%
\section{Description of Automorphism Group of Classical Groups}\label{automorphismclassical}

To build a MOR cryptosystem we need to work with the automorphism group of Chevalley groups. In this section we describe the automorphism group of classical groups following Dieudonne~\cite{di}. 
Let $G$ be one of the following groups: adjoint or intermediate Chevalley group of classical type or more generally the groups listed in the table in section~\ref{classicaltype}.

{\bf Conjugation Automorphisms}: For $t\in G$ the map given by $g\mapsto tgt^{-1}$ is an 
automorphism of $G$, called an \textbf{inner automorphism}. More generally if $G$ is a normal subgroup of $N$ then the conjugation maps $g\mapsto ngn^{-1}$ for $n\in N$ are called conjugation automorphisms of $G$.

{\bf Central Automorphisms}: Let $\chi\colon G\rightarrow \mathcal Z(G)$ be a group homomorphism to the center of the group. Then the map $g\mapsto \chi(g)g$ is an automorphism of $G$, known as the central automorphism. There are no non-trivial central automorphisms for perfect groups, for example, the adjoint Chevalley groups $\text{SL}(l+1,K)$ and $\text{Sp}(2l,K)$, $K\geq 4$ and $l\geq 2$. In case of orthogonal group, the center is of two elements $\{I,-I\}$. Any map $\chi$ maps $\Omega_d(K)$ to identity. This implies that there are at most four central automorphisms in this case.

{\bf Field Automorphisms}: Let $f\in \Aut(K)$. Then the map $x_r(t)\mapsto x_r(f(t))$ for all 
$r\in\Phi$ and $t\in K$ extends to an automorphism of $G$. These are called field automorphism. In terms of matrices these amount to
replacing each term of the matrix by its image under $f$.

{\bf Graph Automorphisms}:  A symmetry of Dynkin diagram induces such automorphisms. This
way we get automorphisms of order $2$ for $A_l(K), l\geq 2$ and $D_l(K), l\geq 4$. 
We also get an automorphisms of order $3$ for $D_4(K)$. This map is given by $x_r(t)\mapsto x_{\bar r}(\gamma_rt)$ where $r\mapsto \bar r$ is Dynkin
diagram automorphism and $\gamma_r=\pm 1$.

In the case of $A_l$ for $l\geq 2$, the map $x\mapsto A^{-1}\tr x^{-1}A$ where 
\[A= \begin{pmatrix} 
0&\cdots&0&0&0 & 1\\ 
0&\cdots&0&0& -1&0\\
0&\cdots&0&1&0&0\\
0&\cdots&-1&0&0&0\\
\vdots& \Ddots &\vdots&\vdots&\vdots&\vdots\\
(-1)^{l-1}&\cdots&0&0&0&0
\end{pmatrix} \]
explicitly describes the graph automorphism. 

In the case of $D_l$ for $l\geq 5$, the graph automorphism is given by $x\mapsto B^{-1}xB$
where $B$ is a permutation matrix obtained from identity matrix of size $2l\times 2l$ by switching
the $l\textsuperscript{th}$ row and $-l\textsuperscript{th}$ row. 
This automorphism is a conjugating automorphism.

\begin{theorem}[Dieudonne]\label{autoch} 
Let $K$ be a field of odd characteristic and $l\geq 2$.
 \begin{enumerate}
  \item For the group $\textrm{SL}(l+1,K)$ any automorphism is of the form  $\iota \gamma \theta$ where $\iota$ is a conjugation automorphism defined by elements of $\text{GL}(l+1,K)$ and $\gamma$ is a graph automorphism of $A_l$ type. 
\item For the group $\text{O}(d,K)$ any automorphism is of the form $c_{\chi} \iota \theta$ where $c_{\chi}$ is a central automorphism, $\iota$ a conjugation automorphism by $\text{GO}(d,K)$ elements (this includes the graph automorphism of $D_l$ case). 
\item For the group $\text{Sp}(2l,K)$ any automorphism is of the form $\iota \theta$ where $\iota$ is a conjugation automorphism by $\text{GSp}(2l,K)$ elements.
\end{enumerate}
In all cases $\theta$ denotes field automorphisms.
\end{theorem}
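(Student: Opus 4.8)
The plan is to prove the three cases uniformly by exploiting the root-subgroup and $(B,N)$-pair structure set up in Section~\ref{chevalleygroups}, reducing an arbitrary automorphism to a product of the four standard types, and only at the end translating the ``graph'' and ``diagonal'' pieces into the matrix language of conjugation by similitude groups. First I would observe that it suffices to understand automorphisms of the derived group (the intermediate Chevalley group $\bar G$), handling the discrepancy between $\bar G$, $G$ and its central quotient by the central automorphisms $c_\chi$. For $\mathrm{SL}(l+1,K)$ and $\mathrm{Sp}(2l,K)$ with $l\geq 2$ these groups are perfect, so no $c_\chi$ is needed, which is exactly why those two cases omit the central factor; whereas for $\mathrm{O}(d,K)$ the quotient $\mathrm{O}/\Omega$ is an elementary abelian $2$-group of order $4$, accounting for the ``at most four'' central automorphisms noted above.

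Step two is the heart of the argument. Given an automorphism $\phi$, I would show that $\phi$ permutes a distinguished $G$-conjugacy class of root subgroups $X_r$, characterized intrinsically by their centralizer and commutator structure (transvections in type $A$, reflections and Eichler--Siegel transformations in the orthogonal and symplectic cases). This permutation respects the Chevalley commutator relations among the $X_r$, hence induces an automorphism of the associated spherical building, equivalently of the incidence geometry of the natural module $V$ together with its form $\beta$. By the fundamental theorem of projective geometry in type $A$, and its form-preserving analogue (Tits' theorem on automorphisms of buildings, or Dieudonn\'e's direct geometric argument on the geometry of isotropic subspaces) in the other cases, this geometric automorphism is realized by a semilinear transformation of $V$ that preserves $\beta$ up to a scalar and a field automorphism, i.e.\ by a semi-similitude. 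This yields the conjugation factor $\iota$ by an element of $\mathrm{GL}$, $\mathrm{GO}$, or $\mathrm{GSp}$ together with the field automorphism $\theta$; in type $A_l$ the outer graph automorphism $x\mapsto A^{-1}\tr x^{-1}A$ appears here as the extra symmetry of the building that is realized not by any linear map but by the transpose-inverse.

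Having peeled off $\iota\theta$ (and $\gamma$ in type $A$), step three analyzes the residual automorphism $\psi$, which now stabilizes $U$, $V$ and $H$ and fixes the root system pointwise. Such a $\psi$ must act on each root subgroup $X_r\cong(K,+)$ as an additive automorphism $t\mapsto c_r t$ compatible with the commutator formula; solving the resulting multiplicative constraints on the constants $c_r$ forces $\psi$ to be conjugation by an element of the diagonal torus of the relevant similitude group (Definition~\ref{diagonalgroup}), a diagonal automorphism, again absorbed into $\iota$. The subtle case is $D_l$: its Dynkin diagram symmetry is genuinely outer for $\Omega_{2l}$ and $P\Omega_{2l}$, yet becomes a conjugation once we enlarge to $\mathrm{O}(2l,K)$, because the reflection $w_l=I-e_{l,l}-e_{-l,-l}-e_{l,-l}-e_{-l,l}$ lies in $\mathrm{O}\setminus\mathrm{SO}$ and realizes the graph map. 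This is precisely why the statement records the $D_l$ graph automorphism inside ``conjugation by $\mathrm{GO}(d,K)$'' rather than as a separate factor.

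I expect the main obstacle to be step two: making the passage from a purely group-theoretic automorphism to a genuine semi-similitude of $(V,\beta)$ fully rigorous, in particular singling out the correct class of root subgroups intrinsically and invoking the right form of the fundamental theorem so that the similitude factor and the field automorphism separate cleanly, while simultaneously isolating the exceptional graph symmetries. Since the paper attributes the result to Dieudonn\'e~\cite{di} and uses it only as input, in practice I would cite his geometric proof for the heavy lifting of step two and supply the Chevalley-theoretic reductions of steps one and three directly.
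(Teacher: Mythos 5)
The paper does not prove this theorem: it is stated as a quoted result attributed to Dieudonn\'e~\cite{di} (and used only as input for the later security analysis), so there is no internal proof to compare your attempt against. Your sketch follows the standard line of the cited proof --- reduction to the perfect/derived subgroup with the central automorphisms accounting for the discrepancy, recognition of root subgroups and the geometric argument producing a semi-similitude (hence $\iota$ and $\theta$), and absorption of the diagonal and $D_l$-graph pieces into conjugation by the similitude group, consistent with the paper's own remarks that the $D_l$ graph map is a conjugating automorphism --- and your closing decision to defer the hard geometric step to Dieudonn\'e is exactly what the paper itself does.
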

In the above theorem, conjugation automorphisms are given by conjugation by elements of a larger group ans it includes the group of inner automorphisms. We introduce diagonal automorphisms to make it more precise. The conjugation automorphisms $\iota$ can be written as a product of $\iota_g$ and $\delta$ where $\iota_g$ is an inner automorphism and $\delta$ is a diagonal automorphism. 
 
 {\bf Diagonal Automorphisms}: The adjoint Chevalley group $\mathcal{L}(K)$ is normalized by $\hat H$ which is a subgroup of 
$\Aut(\mathcal L_K)$. Thus for $h(\chi)\in \hat H$ which is not in $H$ gives an automorphism
$g\rightarrow h(\chi)gh(\chi)^{-1}$ (which is not an inner automorphism). Such automorphisms 
are called diagonal automorphism. The explicit action on generators is as follows:
$h(\chi)x_r(t)h(\chi)^{-1}= x_r(\chi(r)t)$. The group $\hat G$ is identified in~\cite[Chapter III, Section 6]{sel} with corresponding similitude group.
In the case of $A_l$ the diagonal automorphisms are given by conjugation by diagonal
elements of $\text{PGL}(l+1,q)$ on $A_l(q)=\text{PSL}(l+1,q)$. 
In the case of $B_l, C_l$ and $D_l$ the diagonal automorphisms are given by conjugation by the corresponding diagonal group defined in Section~\ref{diagonalgroup}. 

Let $K$ be a finite field of odd characteristic and $G=\mathcal L(K)$ be an adjoint Chevalley group over $K$ as defined in Section~\ref{chevalleygroups}.
Steinberg described the automorphisms of these groups.
 We have the following theorem~\cite[Theorem 12.5.1]{ca} and ~\cite{st1},
\begin{theorem}[Steinberg]
Let $G=\mathcal L(K)$ where $\mathcal L$ is simple and $K(=\mathbb
F_q)$ is a finite field. 
Let $\phi\in \Aut(G)$. Then there exist inner, diagonal, graph and
field automorphisms, denoted by
$\iota ,\delta, \gamma$ and $\theta$ respectively, such that $\phi = \iota \delta \gamma \theta$.
\end{theorem}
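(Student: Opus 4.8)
The plan is to run the classical $(B,N)$-pair argument: use the Sylow structure of the maximal unipotent subgroup to normalize $\phi$ by an inner automorphism, then peel off the graph and field pieces from the induced action on root subgroups, leaving a diagonal automorphism. Write $p$ for the characteristic of $K$, let $U$ and $V$ be the subgroups generated by the positive and negative root subgroups, $B$ the Borel subgroup containing $U$, and $H$ the diagonal subgroup; recall from Section~\ref{chevalleygroups} that $U$ is a Sylow $p$-subgroup of $G=\mathcal L(K)$ and that the $X_r$ generate $G$. Since $\phi$ is an automorphism, $\phi(U)$ is again a Sylow $p$-subgroup, hence conjugate to $U$ by Sylow's theorem. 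Composing $\phi$ with the inner automorphism $\iota$ implementing that conjugation, I may assume $\phi(U)=U$. Because $B=N_G(U)$ in a group with a $(B,N)$-pair, this gives $\phi(B)=B$; applying the same reasoning to $V$ and using that all complements to $U$ in $B$ are $U$-conjugate, a further inner adjustment arranges $\phi(H)=H$ as well.

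Next I extract the graph automorphism. The simple root subgroups $X_{p_1},\ldots,X_{p_l}$ are detected intrinsically inside $U$ through its abelianization $U/[U,U]$, which decomposes as the direct sum of the simple root subgroups; using the $H$-action to distinguish the individual lines, $\phi$ permutes the set $\{X_{p_1},\ldots,X_{p_l}\}$. This permutation must preserve the commutator relations among the $X_{p_i}$, hence respects the Cartan integers $A_{rs}$, so it is induced by a symmetry of the Dynkin diagram. Composing $\phi$ with the corresponding graph automorphism $\gamma$, I may assume $\phi$ stabilizes each simple root subgroup, and then, via $\phi(B)=B$ and the Bruhat decomposition, each root subgroup $X_r$ set-wise.

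With every $X_r\cong(K,+)$ stabilized, $\phi$ restricts on $X_r$ to an additive bijection $x_r(t)\mapsto x_r(f_r(t))$ with $f_r\colon K\to K$ additive and $f_r(0)=0$. The heart of the argument is to feed these maps into the Chevalley commutator formula: comparing $\phi$ applied to a commutator $[x_r(t),x_s(u)]$ with the commutator of the images forces the $f_r$ to be mutually compatible and multiplicative, so that there is a single field automorphism $\theta$ and scalars $c_r\in K^\times$ with $f_r(t)=c_r\,\theta(t)$. The compatibility $c_{r+s}=c_rc_s$ means $r\mapsto c_r$ defines a character $\chi\in\Hom(P,K^\times)$, so by the description in Section~\ref{chevalleygroups} the scalars are realized by $h(\chi)\in\hat H$ via $h(\chi)x_r(t)h(\chi)^{-1}=x_r(\chi(r)t)$, i.e.\ by a diagonal automorphism $\delta$. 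After composing with $\delta^{-1}\theta^{-1}$ the resulting automorphism fixes $x_r(t)$ for all $r$ and $t$; since the $X_r$ generate $G$ it is the identity, and unwinding the compositions (using the standard commutation relations among the four types to put them in order) yields $\phi=\iota\delta\gamma\theta$.

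I expect the third step to be the main obstacle. Establishing that each additive $f_r$ is genuinely a \emph{field} automorphism rather than merely an $\mathbb F_p$-linear one, and that a single $\theta$ serves \emph{all} roots simultaneously, is precisely where the Chevalley commutator formula, the connectedness of the Dynkin diagram, and the finiteness of $K$ must be combined; matching the residual scalars $c_r$ to an honest character of $P$ that is realized inside $\hat H$, so that $\delta$ is a bona fide diagonal automorphism, is the delicate bookkeeping that completes the proof.
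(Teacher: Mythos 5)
The paper does not prove this statement at all: it is quoted as a known classical theorem, attributed to Steinberg with citations to Carter's book (Theorem 12.5.1) and to Steinberg's own work, so there is no in-paper argument to compare against. Your outline is, in substance, the standard proof from exactly those sources: normalize $\phi$ by an inner automorphism using the fact that $U$ is a Sylow $p$-subgroup, deduce $\phi(B)=B$ from $B=N_G(U)$, stabilize $H$ and the root subgroups, read off a Dynkin-diagram symmetry from the induced permutation of simple root subgroups, and then use the Chevalley commutator formula together with the $h_r(\lambda)$-action to show each $f_r$ is a common field automorphism twisted by scalars $c_r$ that assemble into a character of the root lattice, i.e.\ an element of $\hat H$. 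Two points deserve flagging. First, the step where you arrange $\phi(H)=H$ is usually carried out by showing $\phi(V)$ is conjugate to $V$ by an element of $U$ (via the Bruhat decomposition) rather than by an appeal to conjugacy of complements of $U$ in $B$; the latter is not how the cited proofs proceed and would itself need justification. Second, your claim that the permutation of simple root subgroups must preserve the Cartan integers, hence come from a diagram symmetry, is exactly the point at which the exceptional graph automorphisms of $B_2$ and $F_4$ in characteristic $2$ and of $G_2$ in characteristic $3$ intrude: those automorphisms interchange long and short roots and transpose the Cartan matrix rather than preserving it, so for the theorem as stated (arbitrary simple $\mathcal L$, arbitrary $\mathbb F_q$) the "graph" factor must be construed broadly enough to include them. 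This caveat is harmless for the present paper, which restricts to classical types in odd characteristic, but your sketch as written would not close in those exceptional cases. Otherwise the proposal is a faithful and essentially correct outline of the argument in the cited references, with the third step (propagating a single $\theta$ across all roots via connectedness of the diagram) correctly identified as the technical core.
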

The automorphism groups of Chevalley groups over certain rings have been studied by Bunina~\cite{bu1, bu2}.

%%%%%%%%%%%%%%%%%%%%%%%%%%%%%%%%%%%%%
\section{Solving the word problem in $G$}\label{wordproblem}

We work with a finite field $K=\mathbb F_q$ of odd characteristic. Let $G$ be one of the following groups: $\text{SL}(l+1,q)$, $\text{O}(2l+1,q)$, $\text{Sp}(2l,q)$ or $\text{O}(2l,q)$ for $l\geq 2$. Following the notation from the theory of Chevalley groups we also call them $A_l, B_l, C_l$ or $D_l$ type respectively. 
We know that the group $G$ is generated by Chevalley generators listed in the Section~\ref{chgenerators}. In
fact, there are finite presentations for these groups due to Steinberg.
In computational group theory, one is always looking for algorithms
that solve the word problem. Algorithms for word problem are useful in other
programs in computational group theory, such as, the group recognition program and studying the
membership problems in finite groups. Extensive work on these programs
are being done by several people, most notably of those are Leedham-Green and O'Brien~\cite{lo} and Guralnick et.~al.~\cite{gkkl1, gkkl2, gkkl3}.  We need an (efficient) algorithm to write an element $g\in G$ as a product of generators, i.e., a solution to the word problem for an efficient implementation of the MOR cryptosystem.
 
In the case of groups of $A_l$ type, i.e., when $G$ is a special linear group, one has the well-known algorithm, the row-column
operations. One observes that the effect of multiplying by a Chevalley
generator on a matrix from left or right is either a row or a column operation
respectively. Using this algorithm one can start with any matrix $g\in\text{SL}(l+1,q)$ and get the identity matrix thus writing $g$ as a product of generators. One of the objective in this paper is to develop a similar algorithm for the groups of type $B_l, C_l$ and $D_l$ type.  

In general, one has the Bruhat decomposition for Chevalley groups which can be used to write any element in a normal form. 
Every element $g\in \mathcal L(K)$ has a unique expression ~\cite[Corollary
8.4.4]{ca} $u_1hn_{w}u$ where $u_1\in U, h\in H, w\in W$
and $u\in U_w^-$. Here we fixed a coset representative for each $w\in
W$ and denote it by $n_w$. The element $n:=hn_w$ belongs to $N$. 

Thus, the main objective of this section is to give an algorithm, in a similar
line as the row-column operations for $A_l$, to solve the word problem for other Chevalley groups. 

Cohen, Murray and Taylor~\cite{CMT} proposed a generalized algorithm
using the row-column operations, using a representation of Chevalley
groups. The key idea there was to bring down an element to a maximal
parabolic subgroup and repeat the process inductively. Here we use the natural
matrix representation of these groups. Thus our algorithm is more
direct and works with matrices explicitly and effectively. A novelty of our algorithm is that we do not need to assume that the Chevalley generators generate the group under consideration. Thus our algorithm proves independently the fact that these groups are generated by those generators.  

%%%%%%%%%%%%%%%%%%%%%%%%%%%%%
\subsection{ An algorithm for row-column operations for the groups of Lie type $C_l$ and $D_l$}
First we will deal with groups of $C_l$ and $D_l$ type. That is, we work with
groups $\text{Sp}(2l,q)$ and $\text{O}(2l,q)$. The Chevalley generators are described in Section~\ref{chgenerators}.
In general, we have three kind of Chevalley generators. For $1\leq i,j\leq l$
\begin{enumerate}
\item[CG1:] $\begin{pmatrix}R&\\ & \tra R^{-1}\end{pmatrix}$ where $R=I+te_{i,j}$; $i\neq j $.
\item[CG2:] $\begin{pmatrix} I& R \\ &I\end{pmatrix}$ where $R$ is either $t(e_{i,-j}+e_{j,-i})$ or $te_{i,-i}$ in the case of $C_l$ and $R$ is $t(e_{i,-j}-e_{j,-i})$ in the case of $D_l$.
\item[CG3:] $\begin{pmatrix} I&  \\ R &I\end{pmatrix}$ where $R$ is either $t(e_{-i,j}+e_{-j,i})$ or $te_{-i,i}$ in the case of $C_l$ and $R$ is $t(e_{-i,j}-e_{-j,i})$ in the case of $D_l$.
\end{enumerate}

Let $g=\begin{pmatrix}A&B\\C&D\end{pmatrix}$ be a $2l\times 2l$ matrix. Let us note the effect of multiplying $g$ by elements from above.
\begin{eqnarray*}
CG1:&\begin{pmatrix}R&\\ & \tra R^{-1}\end{pmatrix}\begin{pmatrix}A&B\\C&D\end{pmatrix}&=\begin{pmatrix}RA&RB \\ \tra R^{-1}C & \tra R^{-1} D\end{pmatrix}\\
&\begin{pmatrix}A&B\\C&D\end{pmatrix}\begin{pmatrix}R&\\ & \tra R^{-1}\end{pmatrix}&=\begin{pmatrix} AR&B\tra R^{-1} \\ CR & D\tra R^{-1}\end{pmatrix}.\\
CG2: & \begin{pmatrix}I&R\\ &I\end{pmatrix}\begin{pmatrix}A&B\\C&D\end{pmatrix}&=\begin{pmatrix}A+RC&B+RD \\ C & D\end{pmatrix} \\
&\begin{pmatrix}A&B\\C&D\end{pmatrix}\begin{pmatrix}I&R\\ &I \end{pmatrix}&=\begin{pmatrix} A&AR+B \\ C & CR+ D\end{pmatrix}.\\
CG3: &\begin{pmatrix}I&\\R &I\end{pmatrix}\begin{pmatrix}A&B\\C&D\end{pmatrix}&=\begin{pmatrix}A&B \\ RA+ C & RB+D\end{pmatrix}\\ 
&\begin{pmatrix}A&B\\C&D\end{pmatrix}\begin{pmatrix}I&\\ R&I \end{pmatrix}&=\begin{pmatrix} A+BR&B \\ C+DR & D\end{pmatrix}.
\end{eqnarray*}

\subsubsection{Algorithm}We produce a brief overview of the row-column operations for groups of type $\text{Sp}(2l,q)$ and $\text{O}(2l,q)$.
\begin{itemize}
\item[Step 1:]
\textbf{Input}: A matrix $g=\begin{pmatrix}A&B\\C&D\end{pmatrix}$ which belongs to $\text{Sp}(2l,q)$ or $\text{O}(2l,q)$. 

\noindent\textbf{Output}: The matrix $g_1=\begin{pmatrix}A_1&B_1\\C_1&D_1\end{pmatrix}$ is one of the following kind: 
\begin{enumerate} 
\item[a:] The matrix $C_1$ is a diagonal matrix $\diag(1,1,\ldots, 1, \lambda)$ and $A_1$ is $\begin{pmatrix} A_{11}&A_{12}\\ A_{21}&a_{22}\end{pmatrix}$ where $A_{11}$ is symmetric in the $\text{Sp}(2l,q)$ case and skew-symmetric in the $\text{O}(2l,q)$ case of size $l-1$. Furthermore, $A_{12} = \lambda \tra A_{21}$ in the $\text{Sp}(2l,q)$ case and $A_{12} = -\lambda \tra A_{21}$ in the $\text{O}(2l,q)$ case. 
\item[b:] The matrix $C_1$ is a diagonal matrix $\diag(1,1,\ldots,1,0,\ldots,0)$ with number of $1$s equal to $m$ and $A_1$ looks like $\begin{pmatrix}A_{11}&0\\ A_{21}& A_{22}\end{pmatrix}$ where $A_{11}$ is an $m\times m$ symmetric in the $\text{Sp}(2l,q)$ case and skew-symmetric in the $\text{O}(2l,q)$ case.
\item[c:] The matrices $B_1$ and $D_1$ are $l\times l$.
\end{enumerate}
 
\paragraph{\textbf{Justification}}: Observe that the effect of CG1 on $C$ is the
usual row-column operations. Thus we can reduce $C$ to the diagonal form and Corollary~\ref{corA} makes sure that $A$ has required form.

\item[Step 2:]
\textbf{Input:} matrix $g_1=\begin{pmatrix}A_1&B_1\\C_1&D_1\end{pmatrix}$.

\noindent\textbf{Output:} matrix $g_2=\begin{pmatrix}A_2&B_2\\0&\tra A_2^{-1}\end{pmatrix}$; $A_2$ is a diagonal matrix $\diag(1,1,\ldots,1,\lambda)$.

\noindent\textbf{Justification}: Observe the effect of CG2. It changes $A_1$ by $A_1+RC_1$. 
Using Lemma~\ref{lemmaC} we can make the matrix $A_1$ the zero matrix in the first case and $A_{11}$ the zero matrix in the second case. After that we make use of Lemma~\ref{lemmaD} to interchange the rows so that we get zero matrix at the place of $C_1$. If required use CG1 to make $A_1$ a diagonal matrix. The Lemma~\ref{lemmaB} ensures that $D_1$ becomes $\tra A^{-1}_2$. 

\item[Step 3:] 
\textbf{Input:}  matrix $g_2=\begin{pmatrix}A_2&B_2\\0&\tra A_2^{-1}\end{pmatrix}$; $A_2$ is a diagonal matrix $\diag(1,\ldots,1,\lambda)$.

\noindent\textbf{Output:} Matrix $g_3=\begin{pmatrix}A_2&0\\0&\tra A_2^{-1}\end{pmatrix}$; $A_2$ is diagonal matrix $\diag(1,1,\ldots,1,\lambda)$.

\noindent\textbf{Justification}: Using Corollary~\ref{corB} we see that the
matrix $B_2$ has certain form. We can use CG2 to make the matrix
$B_2$ a zero matrix because of Lemma~\ref{lemmaC}.

\item[Step 4:] 
\textbf{Input:} matrix $g_3=\diag(1,\ldots,1,\lambda,1,\ldots,1,\lambda^{-1})$.

\noindent\textbf{Output:} Identity matrix

\noindent\textbf{Justification}: In the case of $\text{Sp}(2l,q)$ the diagonal matrix can be written as a product of generators by first part of Lemma~\ref{lemmaE}. In the case of $\text{O}(2l,q)$, using second part of Lemma~\ref{lemmaE} we can reduce to $\diag(1,\ldots,1,\zeta,1,\ldots,1,\zeta^{-1})$  where $\zeta$ is a fixed non-square in $\mathbb F_q$. Thus multiplying with $d(\zeta)^{-1}$ we get the result.
\end{itemize}
\subsection{Time-complexity of the above algorithm} We establish that the time-complexity of the above algorithm is $\mathcal{O}(l^3)$.
\begin{itemize}
\item[] In Step 1, we are making $C$ a diagonal matrix by row-column operations. That has complexity $\mathcal{O}(l^3)$.
\item[] In Step 2, $A_1+RC_1$ is two field multiplications and two additions. In the worst case, it has to be done $l^2$ times and so the complexity is $\mathcal{O}(l^2)$.
\item[] Step 3 is similar to Step 2 above and has complexity $\mathcal{O}(l^2)$.
\item[] Step 4 has only a few steps that is independent of $l$.
 \end{itemize}
Then clearly, the time-complexity of our algorithm is $\mathcal{O}(l^3)$.
%%%%%%%%%%%%%%%%%%%%%%%%%%%%%%%%
\subsection{Flowchart of the above algorithm}
The input to the algorithm is a $2l\times 2l$ matrix in $\text{Sp}(2l,q)$ or $\text{O}(2l,q)$ represented as blocks of size $l$.
\vskip5mm
\begin{displaymath}
\xymatrix{
&*{\begin{pmatrix} A&B\\C&D\end{pmatrix}}\ar[ld]\ar[rd]& \text{Each
  block is of size}\; l\times l\\%starting matrix
*{\begin{pmatrix}{\left[\begin{smallmatrix} A_{11}&\pm \lambda \tr A_{21}\\A_{21}&a\end{smallmatrix}\right]}&*\\{\left[\begin{smallmatrix}I_{l-1}&\\&\lambda\end{smallmatrix}\right]}&*\end{pmatrix}}\ar[d]   & {\small A_{11}=\pm \tra A_{11}\text{(use CG1)} }  &   
*{\begin{pmatrix}{\left[\begin{smallmatrix} A_{11}& 0_{l-m}\\ *&*\end{smallmatrix}\right]}&*\\{\left[\begin{smallmatrix}I_{m}&\\&0_{l-m}\end{smallmatrix}\right]}&*\end{pmatrix}}\ar[d]\\%step 1
*{\begin{pmatrix} 0_l&*\\{\left[\begin{smallmatrix}I_{l-1}&\\&\lambda\end{smallmatrix}\right]}&*\end{pmatrix}}\ar[d]  & \text{use CG2} & *{\begin{pmatrix}{\left[\begin{smallmatrix} 0_{m\times l} \\ *_{(l-m)\times l}\end{smallmatrix}\right]}&*\\{\left[\begin{smallmatrix}I_{m}&\\&0_{l-m}\end{smallmatrix}\right]}&*\end{pmatrix}}\ar[d]\\ %step2
*{\begin{pmatrix}
    {\left[\begin{smallmatrix}I_{l-1}&\\&\lambda\end{smallmatrix}\right]}&*\\
    0_l & *\end{pmatrix}}\ar[rd]  & \text{row flipping, use CG2, CG3} & *{\begin{pmatrix}{\left[\begin{smallmatrix} I_m & 0 \\ *&*\end{smallmatrix}\right]}&*\\ 0_l&*\end{pmatrix}}\ar[ld]\\ %step 3
&*{\begin{pmatrix}{\left[\begin{smallmatrix}1&&&\\ &\ddots &&\\&&1&\\&&&\lambda\end{smallmatrix}\right]} & * \\ 0_l & {\left[\begin{smallmatrix}1&&&\\ &\ddots &&\\&&1&\\&&&\lambda^{-1}\end{smallmatrix}\right]}\end{pmatrix}}\ar[d]& \text{use CG1} \\ %step 4
&*{\begin{pmatrix}{\left[\begin{smallmatrix}1&&&\\ &\ddots &&\\&&1&\\&&&\lambda\end{smallmatrix}\right]} & 0_l \\ 0_l & {\left[\begin{smallmatrix}1&&&\\ &\ddots &&\\&&1&\\&&&\lambda^{-1}\end{smallmatrix}\right]}\end{pmatrix}}\ar[d]& \text{use CG2}\\
&I_{2l}& \text{use CG1, CG2, CG3}
}
\end{displaymath}

%%%%%%%%%%%%%%%%%%%%%%%%%%%%%
\subsection{Useful lemmas}

In this section we set notation and prove lemmas which were used (and will be used) to justify the above algorithm (and the later algorithm). Some of these might be well known to experts but we include them here for the convenience of the reader. We make use of the following while computing with matrices:
\[e_{i,j}e_{k,l}=\delta_{jk}e_{i,l}\;\text{where}\; \delta_{jk}\;\text{is the Kronecker delta}.\]

\begin{lemma}\label{lemmaA}
Let $Y=\diag(1,\ldots,1,\lambda,\ldots,\lambda)$ of size $l$ with number of $1$s equal to $m<l$. Let $X$ be a matrix such that $YX$ is symmetric (skew-symmetric) then $X$ is of the form $\begin{pmatrix}X_{11}& \lambda \tr X_{21}\\ X_{21}&X_{22}\end{pmatrix}$ where $X_{11}$ is symmetric (skew symmetric) and $X_{12}=\lambda \tr X_{21}$ ($X_{12}=- \lambda \tr X_{21}$).
\end{lemma}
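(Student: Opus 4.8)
The plan is to reduce the claim to a block-by-block comparison. Partition $X$ according to the $m+(l-m)$ splitting that defines $Y$, writing $X=\begin{pmatrix}X_{11}&X_{12}\\ X_{21}&X_{22}\end{pmatrix}$ with $X_{11}$ of size $m\times m$ and $X_{22}$ of size $(l-m)\times(l-m)$, and observe that $Y=\begin{pmatrix}I_m&0\\ 0&\lambda I_{l-m}\end{pmatrix}$. Everything then follows by matching the corresponding blocks of $YX$ and its transpose.

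First I would compute the product, which is immediate: $YX=\begin{pmatrix}X_{11}&X_{12}\\ \lambda X_{21}&\lambda X_{22}\end{pmatrix}$. Since $\tr{(YX)}=\begin{pmatrix}\tr X_{11}&\lambda\tr X_{21}\\ \tr X_{12}&\lambda\tr X_{22}\end{pmatrix}$, imposing the symmetry hypothesis $YX=\tr{(YX)}$ and equating the four blocks yields $X_{11}=\tr X_{11}$ from the $(1,1)$ block, $X_{12}=\lambda\tr X_{21}$ from the $(1,2)$ block, the identity $\lambda X_{21}=\tr X_{12}$ from the $(2,1)$ block (which is merely the transpose of the previous relation and so carries no new information), and $\lambda X_{22}=\lambda\tr X_{22}$ from the $(2,2)$ block. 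This is exactly the asserted form, with $X_{11}$ symmetric and $X_{12}=\lambda\tr X_{21}$.

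The skew-symmetric case is handled in the same way, replacing $YX=\tr{(YX)}$ by $YX=-\tr{(YX)}$; every sign in the block comparison flips, giving $X_{11}$ skew-symmetric and $X_{12}=-\lambda\tr X_{21}$. There is no real obstacle here, since the whole argument is a single direct computation; the only point requiring a word is that $\lambda\neq 0$ (which holds because $Y$ arises from a nonsingular diagonal matrix), so that the $(2,2)$ comparison may be divided by $\lambda$ to conclude in addition that $X_{22}$ is itself (skew-)symmetric, although only the two stated conditions on $X_{11}$ and $X_{12}$ are needed in the sequel.
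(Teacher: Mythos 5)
Your proof is correct and follows essentially the same route as the paper's: compute $YX=\begin{pmatrix}X_{11}&X_{12}\\ \lambda X_{21}&\lambda X_{22}\end{pmatrix}$ block by block and impose (skew-)symmetry, which the paper does in one line. The only caveat is your parenthetical claim that $\lambda\neq 0$ always holds — the lemma is in fact invoked later with $\lambda=0$ (Corollary~\ref{corA}, first case) — but as you note yourself, $\lambda\neq 0$ is only needed for the unstated conclusion about $X_{22}$, so this does not affect the proof of the lemma as stated.
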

\begin{proof}
We observe that the matrix $YX=\begin{pmatrix}X_{11}& X_{12}\\\lambda X_{21}&\lambda X_{22}\end{pmatrix}$. The condition that $YX$ is symmetric implies $X_{11}$ (and $X_{22}$ if $\lambda\neq 0$) is symmetric and $X_{12}=\lambda \tra X_{21}$. 
\end{proof}
\begin{corollary}\label{corA}
Let $g=\begin{pmatrix}A&B\\C&D\end{pmatrix}$ be either in $\text{Sp}(2l,q)$ or $\text{O}(2l,q)$.
\begin{enumerate}
\item  If $C$ is a diagonal matrix $\diag(1,1,\ldots,1,0,\ldots,0)$ with number of $1$s equal to $m<l$ then the matrix $A$ has to be of the form $\begin{pmatrix}A_{11}&0\\ A_{21}& A_{22}\end{pmatrix}$ where $A_{11}$ is an $m\times m$ symmetric if $g$ is symplectic  and is skew-symmetric if $g$ is orthogonal.
\item If $C$ is a diagonal matrix $\diag(1,1,\ldots,1,\lambda)$ then the matrix $A$ has to be of the form $\begin{pmatrix}A_{11}&\lambda \tr A_{21}\\ A_{21}& A_{22}\end{pmatrix}$ where $A_{11}$ is an $(l-1)\times (l-1)$ symmetric if $g$ is symplectic  and is skew-symmetric if $g$ is orthogonal.
\end{enumerate}
\end{corollary}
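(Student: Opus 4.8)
The plan is to read off the required symmetry condition on $A$ directly from the defining relation $\tr g\beta g=\beta$ and then invoke Lemma~\ref{lemmaA}. Writing $g=\begin{pmatrix}A&B\\C&D\end{pmatrix}$ in $l\times l$ blocks and using the explicit forms $\beta$ fixed for types $C_l$ and $D_l$ in Section~\ref{des2}, I would first compute only the top-left $l\times l$ block of $\tr g\beta g$. Since $\beta g=\begin{pmatrix}C&D\\-A&-B\end{pmatrix}$ in the symplectic case and $\beta g=\begin{pmatrix}C&D\\A&B\end{pmatrix}$ in the orthogonal case, that block works out to $\tr A\,C-\tr C\,A$ (symplectic) and $\tr A\,C+\tr C\,A$ (orthogonal). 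The top-left block of $\beta$ is $0$ in both types, so this block must vanish; hence $\tr A\,C$ is symmetric when $g$ is symplectic and skew-symmetric when $g$ is orthogonal.

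Next I would transpose this relation to put it into the exact shape Lemma~\ref{lemmaA} expects. Because $C$ is diagonal we have $\tr C=C$, so $(\tr A\,C)^{T}=CA$, and therefore $CA$ inherits the same symmetry type: $CA$ is symmetric (symplectic) or skew-symmetric (orthogonal). This is precisely the hypothesis $YX$ symmetric (resp.\ skew-symmetric) of Lemma~\ref{lemmaA} with $Y=C$ and $X=A$, once one observes that the matrix $C$ supplied in each part has the diagonal shape $\diag(1,\ldots,1,\lambda,\ldots,\lambda)$ demanded there.

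Both parts then follow by specializing the lemma. For part~(2), $C=\diag(1,\ldots,1,\lambda)$ is exactly $Y$ with $m=l-1$ ones, so Lemma~\ref{lemmaA} gives $A=\begin{pmatrix}A_{11}&\lambda\tr A_{21}\\ A_{21}&A_{22}\end{pmatrix}$ with $A_{11}$ of size $(l-1)\times(l-1)$ and symmetric in the symplectic case, skew-symmetric in the orthogonal case. For part~(1), $C=\diag(1,\ldots,1,0,\ldots,0)$ with $m$ ones is the degenerate case $\lambda=0$ of the same $Y$; feeding $\lambda=0$ into Lemma~\ref{lemmaA} forces the top-right block $\lambda\tr A_{21}$ to vanish and leaves $A=\begin{pmatrix}A_{11}&0\\ A_{21}&A_{22}\end{pmatrix}$ with $A_{11}$ of size $m\times m$ and of the asserted symmetry type.

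I expect no conceptual obstacle here; the only real care is bookkeeping. One must keep the sign difference between the symmetric ($D_l$) and skew-symmetric ($C_l$) forms straight through the block multiplication, and recognize that part~(1) is genuinely the $\lambda=0$ instance of the general $Y$ in Lemma~\ref{lemmaA} rather than a separate argument. Everything else is a routine $2\times2$ block computation feeding directly into a single citation of Lemma~\ref{lemmaA}.
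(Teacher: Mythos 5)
Your proposal is correct and follows essentially the same route as the paper: both extract the top-left block of $\tr g\beta g=\beta$ to conclude that $CA$ (equivalently $\tr A\,C$) is symmetric in the symplectic case and skew-symmetric in the orthogonal case, and then apply Lemma~\ref{lemmaA} with $Y=C$, $X=A$, reading part~(1) as the $\lambda=0$ instance. The only difference is the trivial one of associating the product as $\tr g(\beta g)$ rather than $(\tr g\beta)g$.
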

\begin{proof}
We use the condition that $g$ satisfies $\tra g\beta g=\beta$.
\begin{eqnarray*}
\tra g \beta g &=& \begin{pmatrix}\tr A&\tra C\\\tra B&\tra D\end{pmatrix} \begin{pmatrix}&I\\\pm I&\end{pmatrix} \begin{pmatrix}A&B\\C&D\end{pmatrix}\\
&=& \begin{pmatrix}\pm \tra C&\tr A\\\pm \tra D&\tra B\end{pmatrix}\begin{pmatrix}A&B\\C&D\end{pmatrix} =\begin{pmatrix}\pm \tra CA+\tr AC&*\\ *&*\end{pmatrix}
\end{eqnarray*}
This gives $\pm \tra CA+\tr AC=0$ which means $CA$ is symmetric (note $C=\tra C$ as $C$ is diagonal) if $g$ is symplectic and is skew-symmetric if $g$ is orthogonal. The Lemma~\ref{lemmaA} gives the required form for $A$.
\end{proof}
\begin{corollary}\label{corB}
Let $g=\begin{pmatrix}A&B\\0&A^{-1}\end{pmatrix}$ where $A=\diag(1,\ldots,1,\lambda)$ be an element of either $\text{Sp}(2l,q)$ or $\text{O}(2l,q)$ then the matrix $B$ is of the form  $\begin{pmatrix}B_{11}&\lambda \tr B_{21}\\ B_{21}& B_{22}\end{pmatrix}$ where $B_{11}$ is a symmetric matrix of size $l-1$ if $g$ is symplectic  and is skew-symmetric if $g$ is orthogonal.
\end{corollary}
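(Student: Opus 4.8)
The plan is to mirror the proof of Corollary~\ref{corA}: extract the constraint on $B$ directly from the defining relation $\tra g \beta g = \beta$, and then feed the resulting matrix into Lemma~\ref{lemmaA}. Writing $g=\begin{pmatrix}A&B\\0&A^{-1}\end{pmatrix}$ and $\beta=\begin{pmatrix}0&I\\ \epsilon I&0\end{pmatrix}$ with $\epsilon=-1$ in the $C_l$ (symplectic) case and $\epsilon=+1$ in the $D_l$ (orthogonal) case, I would expand $\tra g\beta g$ in $l\times l$ blocks. Since $A$ is diagonal we have $\tra A=A$ and $\tra A^{-1}=A^{-1}$, so $\tra g=\begin{pmatrix}A&0\\ \tra B&A^{-1}\end{pmatrix}$. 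The top-left block of $\tra g\beta g$ vanishes, while the top-right and bottom-left blocks come out to $I$ and $\epsilon I$ respectively, matching $\beta$ automatically (these are exactly the relations that force the bottom-right entry of $g$ to be $A^{-1}$). Hence all the content of the defining relation is concentrated in the bottom-right block.

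A short computation shows the bottom-right block equals $\tra B\,A^{-1}+\epsilon A^{-1}B$, so $\tra g\beta g=\beta$ is equivalent to $\tra B\,A^{-1}+\epsilon A^{-1}B=0$. The key step is to read this as a symmetry statement about a single matrix. Setting $M:=A^{-1}B$ and using that $A^{-1}$ is diagonal (hence symmetric), one has $\tra M=\tra B\,A^{-1}=-\epsilon A^{-1}B=-\epsilon M$. Thus $M=A^{-1}B$ is symmetric when $\epsilon=-1$ (symplectic) and skew-symmetric when $\epsilon=+1$ (orthogonal).

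With this in hand I would invoke Lemma~\ref{lemmaA}, taking $Y=A^{-1}=\diag(1,\ldots,1,\lambda^{-1})$ and $X=B$, so that $YX=M$ is symmetric (resp.\ skew-symmetric). Lemma~\ref{lemmaA} then returns precisely the asserted block shape of $B$: the leading $(l-1)\times(l-1)$ block $B_{11}$ is symmetric in the symplectic case and skew-symmetric in the orthogonal case, and the off-diagonal blocks are related by a transpose together with the scalar coming from the last diagonal entry of $Y$, which is what produces the factor linking $B_{12}$ to $\tra B_{21}$.

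The only delicate point is the bookkeeping: one must be careful to feed $A^{-1}B$ (and not $BA^{-1}$) into Lemma~\ref{lemmaA} and to track the scalar contributed by the last diagonal entry of $A^{-1}$, since that scalar is exactly what governs the off-diagonal transpose relation. Everything else is the same routine block multiplication already carried out in Corollary~\ref{corA}, so no genuinely new difficulty is expected.
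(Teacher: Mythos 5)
Your proposal is correct and follows exactly the paper's own argument: expand $\tra g\beta g=\beta$ in $l\times l$ blocks, observe that the only nontrivial condition is $\tra B\,A^{-1}\pm A^{-1}B=0$, conclude that $A^{-1}B$ is symmetric in the symplectic case and skew-symmetric in the orthogonal case, and invoke Lemma~\ref{lemmaA} with $Y=A^{-1}$ and $X=B$. Your closing remark about tracking the scalar coming from the last diagonal entry of $A^{-1}$ is in fact more careful than the paper, which simply cites Lemma~\ref{lemmaA}; carried out literally it produces the factor $\lambda^{-1}$ rather than the $\lambda$ printed in the corollary, a harmless discrepancy in the statement's wording, not a gap in your argument.
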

\begin{proof}
Yet again, we use the condition that $g$ satisfies $\tra g\beta g=\beta$ and $A=\tr A$.
\begin{eqnarray*}
\tra g \beta g &=& \begin{pmatrix} A& \\\tra B& A^{-1}\end{pmatrix} \begin{pmatrix}&I\\\pm I&\end{pmatrix} \begin{pmatrix}A&B\\&A^{-1}\end{pmatrix}\\
&=& \begin{pmatrix}& A\\\pm A^{-1}&\tra B\end{pmatrix}\begin{pmatrix}A&B\\&A^{-1}\end{pmatrix} =\begin{pmatrix}& I\\ \pm I& \pm A^{-1}B+\tra BA^{-1} \end{pmatrix}
\end{eqnarray*}
This gives $\pm A^{-1}B+\tra BA^{-1}=0$ which means $A^{-1}B$ is symmetric if $g$ is symplectic and is skew-symmetric if $g$ is orthogonal. Then Lemma~\ref{lemmaA} gives the required form for $B$.
\end{proof}
\begin{lemma}\label{lemmaB}
Let $g=\begin{pmatrix}A&*\\0&D\end{pmatrix}\in \text{GL}(2l,q)$. If $g$ belongs to $\text{Sp}(2l,q)$ or $\text{O}(2l,q)$ then $D=\tra A^{-1}$. 
\end{lemma}
\begin{proof}
We use $\tra g \beta g=\beta$.
 \begin{eqnarray*}
\begin{pmatrix}&I\\\pm I&\end{pmatrix} = \beta=\tra g \beta g &=& \begin{pmatrix}\tr A& 0 \\ *&\tra D\end{pmatrix} \begin{pmatrix}&I\\\pm I&\end{pmatrix} \begin{pmatrix}A& *\\ 0 &D\end{pmatrix}\\
&=& \begin{pmatrix} 0&\tr A\\\pm \tra D& * \end{pmatrix}\begin{pmatrix}A& *\\0&D\end{pmatrix} =\begin{pmatrix} 0 & \tr AD\\ \pm \tra DA&*\end{pmatrix}
\end{eqnarray*}
This gives $\tr AD=I$.
\end{proof}

\begin{lemma}\label{lemmaC}
Let $Y=\diag(1,1,\ldots,1,\lambda)$ be of size $l$ where $\lambda\neq 0$ and $X=(x_{ij})$ be a matrix such that $YX$ is symmetric (skew-symmetric). Then $X=(R_1+R_2+\ldots)Y$ where each $R_m$ is of the form $t(e_{i,j}+e_{j,i})$ for some $i < j$ or of the form $te_{i,i}$ for some $i$ (in the case of skew-symmetric each $R_m$ is of the form $t(e_{i,j}-e_{j,i})$ for some $i < j$). 
\end{lemma}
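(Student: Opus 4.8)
The plan is to reduce the statement to an elementary fact about symmetric (skew-symmetric) matrices by peeling off the diagonal factor $Y$ on the right. Since $\lambda\neq 0$, the matrix $Y=\diag(1,\ldots,1,\lambda)$ is invertible with $Y^{-1}=\diag(1,\ldots,1,\lambda^{-1})$, so writing $S:=YX$ we have $X=Y^{-1}S$. First I would form the product $XY^{-1}=Y^{-1}SY^{-1}$ and observe that in the symmetric case this matrix is again symmetric: because $Y$ is diagonal we have $\tra Y=Y$ and $\tra{(Y^{-1})}=Y^{-1}$, whence $\tra{(Y^{-1}SY^{-1})}=Y^{-1}(\tra S)Y^{-1}=Y^{-1}SY^{-1}$ using $\tra S=S$. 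In other words, conjugating a symmetric matrix on both sides by the \emph{same} diagonal matrix preserves symmetry, so $M:=XY^{-1}$ is symmetric.

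The second step is the decomposition. Once $M=XY^{-1}$ is known to be symmetric, I would expand it in the standard basis of the space of symmetric matrices, namely $M=\sum_i m_{ii}e_{i,i}+\sum_{i<j}m_{ij}(e_{i,j}+e_{j,i})$, where the $m_{ij}$ are the entries of $M$. Each summand is exactly one of the allowed blocks $R_m$: a diagonal piece $t e_{i,i}$ or an off-diagonal symmetric piece $t(e_{i,j}+e_{j,i})$ with $i<j$. Multiplying on the right by $Y$ then gives $X=MY=(R_1+R_2+\cdots)Y$, which is the assertion.

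For the skew-symmetric case the argument is entirely parallel. If $S=YX$ is skew-symmetric, then $\tra{(Y^{-1}SY^{-1})}=Y^{-1}(\tra S)Y^{-1}=-Y^{-1}SY^{-1}$, so $M=XY^{-1}$ is skew-symmetric; its diagonal entries vanish and $m_{ij}=-m_{ji}$, giving $M=\sum_{i<j}m_{ij}(e_{i,j}-e_{j,i})$. Once more each summand has the prescribed form $t(e_{i,j}-e_{j,i})$, and $X=MY$ finishes the proof.

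There is no serious obstacle here; the only point requiring genuine care is the identity $\tra{(Y^{-1}SY^{-1})}=Y^{-1}(\tra S)Y^{-1}$, which holds precisely because $Y$ is diagonal and hence $\tra Y=Y$. For a non-diagonal $Y$ the product $XY^{-1}$ would in general fail to be symmetric, so the diagonal shape of $Y$ is essential. The remainder is the routine expansion of a symmetric (respectively skew-symmetric) matrix in its natural basis, and that decomposition has been arranged so that each basis element is exactly one of the $R$-blocks occurring in the generators CG2 and CG3; this matching is what lets the lemma drive Step 2 of the algorithm.
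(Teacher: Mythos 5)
Your proof is correct and is essentially the paper's argument in a cleaner packaging: the paper invokes its Lemma on the block form of $X$ forced by the symmetry of $YX$ and then asserts the decomposition, while you get the same conclusion by noting that $XY^{-1}=Y^{-1}(YX)Y^{-1}$ is symmetric (resp.\ skew-symmetric) and expanding it in the standard basis. The mathematical content is identical, so no further comparison is needed.
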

\begin{proof}
Since $YX$ is symmetric, the matrix $X$ is of the following form (see Lemma~\ref{lemmaA}): 
$\begin{pmatrix}X_{11}&X_{12}\\ X_{21} & x_{nn}\end{pmatrix}$ where $X_{11}$ is symmetric and  $X_{21}$ is a row of size $l-1$ $(x_{l1} x_{l2}\cdots x_{l,l-1})$ and $X_{12} = \lambda \tr X_{21}$. Clearly any such matrix is sum of the matrices of the form $RY$. A similar calculation proves the result in the skew-symmetric case.   
\end{proof}

We need certain Weyl group elements which can be used for switching rows.
\begin{lemma}\label{lemmaD}
With the indexing of basis as $1,\ldots,l,-1,\ldots,-l$,
for any matrix $g$ in $\text{Sp}(2l,q)$ or $\text{O}(2l,q)$, the $i\textsuperscript{th}$ row can be interchanged with $-i\textsuperscript{th}$ row with possibly a sign change. Further, we can do the same in $\text{O}(2l+1,q)$.
\end{lemma}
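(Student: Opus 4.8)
The plan is to exhibit, for each index $i$, an explicit element $\sigma_i$ of the group, written as a product of Chevalley generators (together with $w_l$ in the orthogonal cases), whose left multiplication on an arbitrary $g$ performs the desired interchange of rows $i$ and $-i$. Since the statement concerns a fixed left multiplication that is independent of $g$, it suffices to produce such a $\sigma_i$ in each group; the phrase ``with possibly a sign change'' allows these to be Weyl-group representatives rather than bare permutation matrices.

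For the symplectic group I would use the long-root generators $x_{i,-i}(t)=I+te_{i,-i}$ and $x_{-i,i}(t)=I+te_{-i,i}$, which are available only in the $C_l$ case. Setting
\[
\sigma_i := x_{i,-i}(1)\,x_{-i,i}(-1)\,x_{i,-i}(1),
\]
a short computation using $e_{i,j}e_{k,l}=\delta_{jk}e_{i,l}$ gives $\sigma_i = I - e_{i,i} - e_{-i,-i} + e_{i,-i} - e_{-i,i}$. This matrix is the identity off the $\{i,-i\}$-block and equals $\left(\begin{smallmatrix}0&1\\-1&0\end{smallmatrix}\right)$ on it, so left multiplication by $\sigma_i$ sends row $i$ to row $-i$ and row $-i$ to $-(\text{row }i)$, as required. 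Membership is automatic since $\sigma_i$ is a product of the generators.

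For the orthogonal groups $\text{O}(2l,q)$ and $\text{O}(2l+1,q)$ the generator $w_l = I - e_{l,l} - e_{-l,-l} - e_{l,-l} - e_{-l,l}$ already handles the top index: on the $\{l,-l\}$-block it equals $\left(\begin{smallmatrix}0&-1\\-1&0\end{smallmatrix}\right)$ and fixes every other basis vector (including the index $0$ in the $B_l$ case), so left multiplication by $w_l$ interchanges rows $l$ and $-l$ with a sign change on both, and a direct check of $\tra{w_l}\beta w_l = \beta$ confirms $w_l$ is orthogonal. For a general index $i<l$ I would first move it to the top by a permutation-type Weyl element: the product $n_{i,l} := x_{i,l}(1)\,x_{l,i}(-1)\,x_{i,l}(1)$ of $x_{i,j}$-type generators interchanges the indices $i\leftrightarrow l$ and $-i\leftrightarrow -l$ (and does not involve the index $0$), and then $\sigma_i := n_{i,l}^{-1}\,w_l\,n_{i,l}$ interchanges rows $i$ and $-i$ with a sign change. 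Since $n_{i,l}$ lies in the group generated by the $x_{i,j}$ and $w_l$ is a listed generator of the orthogonal group, $\sigma_i$ is again a word in the generators.

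The block-diagonal computations above are all routine, so the one genuine point to keep in mind is the \emph{membership} bookkeeping. In the symplectic form one has $\tra{w_l}\beta w_l = -\beta$, so $w_l$ is \emph{not} symplectic; this is exactly why the $C_l$ case must be handled through the long-root element $\sigma_i$ rather than through $w_l$. In the orthogonal case $w_l$ lies in the nontrivial coset of $\Omega_{2l}$, so each $\sigma_i$ lands outside $\Omega$, which is harmless because the ambient group here is $\text{O}$ and $w_l$ is among its generators. The only step requiring a little care is verifying that $n_{i,l}$ performs the claimed index transposition, so that the conjugation indeed places the swap on the pair $\{i,-i\}$; this follows from the standard fact that the $x_{i,j}$ generators realize the permutation part of the Weyl group.
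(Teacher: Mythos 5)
Your proof is correct, and the symplectic half coincides with the paper's own argument: the paper also forms $w_{i,-i}=x_{i,-i}(1)\,x_{-i,i}(-1)\,x_{i,-i}(1)=I-e_{i,i}-e_{-i,-i}+e_{i,-i}-e_{-i,i}$ from the long-root generators. In the orthogonal cases you take a genuinely different (and somewhat cleaner) route. The paper first computes the CG2/CG3 Weyl elements $w_{ij}=x_{ij}y_{ij}x_{ij}$, which swap the \emph{pairs} $i\leftrightarrow -j$ and $j\leftrightarrow -i$ simultaneously, and then descends inductively via $w_{l-1}=w_l\,\sigma_{l,l-1}\,w_{l,l-1}$, combining these with the CG1 Weyl elements $\sigma_{ij}$ and the extra generator $w_l$. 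You instead produce the single swap $i\leftrightarrow -i$ in one step as the conjugate $n_{i,l}^{-1}w_l n_{i,l}$ of $w_l$ by a CG1 Weyl element; this avoids the CG2/CG3 computation entirely and needs no induction, while the paper's version has the side benefit of recording the auxiliary elements $w_{ij}$ explicitly. Both arguments correctly recognize that a single transposition $i\leftrightarrow -i$ has determinant $-1$ and therefore must involve the generator $w_l$ in the orthogonal case, and that the symplectic case must instead go through the long roots. One small slip in a side remark: for $l\geq 2$ it is not true that $\tra w_l\beta w_l=-\beta$; the product agrees with $\beta$ away from the $\{l,-l\}$ block and with $-\beta$ only on that block. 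The correct (and sufficient) statement is simply that $w_l$ does not preserve the symplectic form, so this does not affect your proof.
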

\begin{proof}
For the symplectic group $\text{Sp}(2l,q)$ consider the following root generators: $x_{i,-i}=I+e_{i,-i}$ and $y_{i,-i}=I-e_{-i,i}$. 
Then the element $w_{i,-i}=x_{i,-i}y_{i,-i}x_{i,-i}$ is in the Weyl group and multiplication by this element to a matrix $g$ has desired property.
\begin{eqnarray*}
w_{i,-i}&=& x_{i,-i}y_{i,-i}x_{i,-i} = (I+e_{i,-i})(I-e_{-i,i})(I+e_{i,-i})\\
&=& (I+e_{i,-i}-e_{-i,i}-e_{i,i})(I+e_{i,-i}) \\
&=& I +e_{i,-i}- e_{-i,i}-e_{i,i} - e_{-i,-i}.
\end{eqnarray*}
In the matrix form:
$$\begin{pmatrix} 1& & 1&\\ &1&&\\&&1&\\&&&1\end{pmatrix}
\begin{pmatrix}1&&&\\ &1&&\\ -1&&1&\\&&&1 \end{pmatrix}
\begin{pmatrix} 1& & 1&\\ &1&&\\&&1&\\&&&1\end{pmatrix}
= \begin{pmatrix} & & 1&\\ &1&&\\-1&&&\\&&&1\end{pmatrix}.
$$

For the orthogonal group $\text{O}(2l,q)$ consider the following root generators: $x_{ij}=I+(e_{i,-j}-e_{j,-i})$ and $y_{ij}=I+(e_{-i,j}-e_{-j,i})$ for $i<j$. Then the element $w_{ij}=x_{ij}y_{ij}x_{ij}$ is in the Weyl group and multiplication by this element to a matrix $g$ changes $i\textsuperscript{th}$ row with $-j\textsuperscript{th}$ row with a sign change and $j\textsuperscript{th}$ row with $-i\textsuperscript{th}$ row with a sign change simultaneously.
\begin{eqnarray*}
w_{ij}&=& x_{ij}y_{ij}x_{ij}=(I+e_{i,-j}-e_{j,-i})(I+e_{-i,j}-e_{-j,i})(I+e_{i,-j}-e_{j,-i})\\
&=& (I+e_{-i,j}-e_{-j,i}+e_{i,-j}+e_{i,-j}e_{-i,j}-e_{i,-j}e_{-j,i}-e_{j,-i}-e_{j,-i}e_{-i,j}\\&&+e_{j,-i}e_{-j,i})(I+e_{i,-j}-e_{j,-i})\\
&=& (I+e_{-i,j}-e_{-j,i}+e_{i,-j}-e_{i,i}-e_{j,-i}-e_{j,j})(I+e_{i,-j}-e_{j,-i})\\
&=& I-e_{i,i}-e_{j,j}-e_{-i,-i}-e_{-j,-j}+e_{i,-j}-e_{j,-i}+e_{-i,j}-e_{-j,i}.
\end{eqnarray*}
In the matrix form:
$$\begin{pmatrix}1&&&1\\&1&-1& \\&&1&\\&&&1 \end{pmatrix}\begin{pmatrix}1&&&\\&1&& \\&1&1&\\-1&&&1 \end{pmatrix}\begin{pmatrix}1&&&1\\&1&-1& \\&&1&\\&&&1 \end{pmatrix}=\begin{pmatrix}&&&1\\&&-1& \\&1&&\\-1&&& \end{pmatrix}.$$
Also since $\text{GL}(l,q)$ embeds inside $\text{O}(2l,q)$ via $A\mapsto \begin{pmatrix}A&&\\&\tr A^{-1}\end{pmatrix}$ the CG1 generators generate the subgroup $\text{SL}(l,q)$ and we have corresponding Weyl group elements, $\sigma_{ij}=I-e_{i,i}-e_{j,j}+e_{i,j}-e_{j,i}-e_{-i,-i}-e_{-j,-j}+e_{-i,-j}-e_{-j,-i}$ which interchanges $i\textsuperscript{th}$ to $j\textsuperscript{th}$ row and $-i\textsuperscript{th}$ to $-j\textsuperscript{th}$ row simultaneously with a sign change. 
We have the extra generator $w_l\in O(2l,q)$ which interchanges $l\textsuperscript{th}$ row with $-l\textsuperscript{th}$ row with a sign change. We can compute and check that
$w_{l-1}=w_l\sigma_{l,l-1}w_{l,l-1}= I-e_{l-1,l-1}-e_{-(l-1),-(l-1)}-e_{(l-1),-(l-1)}-e_{-(l-1),(l-1)}$ which interchanges $l-1\textsuperscript{th}$ row
with $-(l-1)\textsuperscript{th}$ row (possibly with a sign change) and inductively we can produce $w_i$ which interchanges $i\textsuperscript{th}$ row with $-i\textsuperscript{th}$ row possibly with a sign change. In the matrix form:
\begin{eqnarray*}
w_3\sigma_{23}w_{23}&=&\begin{pmatrix}1&&&&&\\&1&&&& \\&&&&&-1\\&&&1&&\\&&&&1&\\&&-1&&& \end{pmatrix}\begin{pmatrix}1&&&&&\\&&-1&&& \\&1&&&&\\&&&1&&\\ &&&&&-1\\&&&&1& \end{pmatrix}\begin{pmatrix}1&&&&&\\&&&&&-1 \\&&&&1&\\&&&1&&\\&&-1&&&\\&1&&&& \end{pmatrix}\\
&=&\begin{pmatrix}1&&&&&\\&&&&-1& \\&&1&&&\\&&&1&&\\ &-1&&&&\\&&&&&1 \end{pmatrix}=w_2.
\end{eqnarray*}
Further notice that $\text{O}(2l,q)$ is embedded inside $\text{O}(2l+1,q)$. Thus we can do the same in $\text{O}(2l+1,q)$ as well.
\end{proof}

\begin{lemma}\label{lemmaE}
\begin{enumerate}
\item In the case of $\text{Sp}(2l,q)$, the element $\diag(\underbrace{1,\ldots,1\lambda}_l,\underbrace{1,\ldots,1,\lambda^{-1}}_l)$ is a product of Chevalley generators.
\item In the case of $\text{O}(2l,q)$, the element $\diag(\underbrace{1,\ldots,1\lambda}_l,\underbrace{1,\ldots,1,\lambda^{-1}}_l)$ is a product of Chevalley generators where $\lambda\in {\mathbb F_q^\times}^2$.
\item In the case of $\text{O}(2l+1,q)$ diagonal elements $\diag(1,\underbrace{1,\ldots,1\lambda}_l,\underbrace{1,\ldots,1,\lambda^{-1}}_l)$ where $\lambda\in\mathbb{F}_q^{\times 2}$ and $\diag(-1,1,\ldots,1)$ are a product of Chevalley generators.
\end{enumerate}
\end{lemma}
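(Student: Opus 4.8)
The plan is to realize each of the listed diagonal matrices as a product of the torus elements $h_r(\lambda)=\phi_r\begin{pmatrix}\lambda&0\\0&\lambda^{-1}\end{pmatrix}$ attached to the $\mathrm{SL}(2,K)$-homomorphisms $\phi_r$ of Section~\ref{chevalleygroups}, supplemented in the odd orthogonal case by the extra generator $w_l$. The key point is that each $h_r(\lambda)$ is itself a word in the root generators $x_{\pm r}$: in $\mathrm{SL}(2,K)$ one has
\[\begin{pmatrix}0&t\\-t^{-1}&0\end{pmatrix}=\begin{pmatrix}1&t\\0&1\end{pmatrix}\begin{pmatrix}1&0\\-t^{-1}&1\end{pmatrix}\begin{pmatrix}1&t\\0&1\end{pmatrix},\qquad\begin{pmatrix}\lambda&0\\0&\lambda^{-1}\end{pmatrix}=\begin{pmatrix}0&\lambda\\-\lambda^{-1}&0\end{pmatrix}\begin{pmatrix}0&1\\-1&0\end{pmatrix}^{-1},\]
so applying $\phi_r$ gives $n_r(t)=x_r(t)x_{-r}(-t^{-1})x_r(t)$ and $h_r(\lambda)=n_r(\lambda)n_r(1)^{-1}$, each a product of Chevalley generators. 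Hence I only need the explicit diagonal shape of the relevant $h_r(\lambda)$, which I read off from the fact that $h_r(\lambda)$ scales the basis vector at index $k$ by $\lambda^{\langle\epsilon_k,r^\vee\rangle}$, where $\epsilon_k$ denotes the character returning the $k$-th diagonal entry and $r^\vee$ is the coroot of $r$.

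For part (1), the type $C_l$ group carries the \emph{long} root generators $x_{l,-l}(t)=I+te_{l,-l}$ and its opposite $x_{-l,l}(t)=I+te_{-l,l}$; the corresponding $\mathrm{SL}(2)$ acts as the standard two-dimensional representation on the span of the basis vectors indexed by $l$ and $-l$ and trivially elsewhere. Computing $h(\lambda)=n(\lambda)n(1)^{-1}$ from these two generators therefore yields exactly $\diag(1,\ldots,1,\lambda,1,\ldots,1,\lambda^{-1})$ for every $\lambda\in\mathbb F_q^\times$, with no restriction on $\lambda$; this settles the symplectic case at once.

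For the orthogonal groups no root touches a single index, which is precisely where the square hypothesis enters. In type $D_l$ the two roots through the last coordinate are $\epsilon_l-\epsilon_{l-1}$ and $\epsilon_l+\epsilon_{l-1}$, with generators $x_{l,l-1}$ (opposite $x_{l-1,l}$) and $x_{l-1,-l}$ (opposite $x_{-(l-1),l}$). The coroot pairing shows that $h_{\epsilon_l-\epsilon_{l-1}}(\mu)$ is the diagonal matrix with entries $\mu,\mu^{-1},\mu^{-1},\mu$ at the indices $l,l-1,-l,-(l-1)$ and $1$ elsewhere, while $h_{\epsilon_l+\epsilon_{l-1}}(\mu)$ has entries $\mu,\mu,\mu^{-1},\mu^{-1}$ at the same indices; their product is therefore $\diag(1,\ldots,1,\mu^2,1,\ldots,1,\mu^{-2})$. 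Writing the given square as $\lambda=\mu^2$ expresses the target as a product of such $h$'s, hence of root generators, proving part (2). For the first matrix of part (3), type $B_l$, the \emph{short} root $\epsilon_l$ has generators $x_{l,0},x_{0,l}$ and the associated $\mathrm{SL}(2)$ acts through its symmetric-square (three-dimensional) representation on the span of the vectors indexed by $l,0,-l$; consequently $h_{\epsilon_l}(\mu)=\diag(1,\ldots,1,\mu^2,1,\ldots,1,\mu^{-2})$ with a fixed $0$-coordinate, again realizing all squares $\lambda=\mu^2$.

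Finally, the element $\diag(-1,1,\ldots,1)$ has determinant $-1$ and so cannot lie in the group generated by the $x_r(t)$ alone; this is where $w_l$ is needed. The plan is to compute $n_{\epsilon_l}=x_{l,0}(1)x_{0,l}(-1)x_{l,0}(1)$ explicitly on the span of the vectors indexed by $l,0,-l$: it sends $v_0\mapsto-v_0$ and interchanges $v_{\pm l}$ up to a sign, and multiplying by $w_l=I-e_{l,l}-e_{-l,-l}-e_{l,-l}-e_{-l,l}$ (which interchanges $v_{\pm l}$ up to a sign and fixes $v_0$) cancels the interchange, leaving exactly $\diag(-1,1,\ldots,1)$. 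I expect the main obstacle to be the bookkeeping of signs in this type $B_l$ computation, where the short-root $\mathrm{SL}(2)$ enters through the symmetric-square representation with its $-t^2$ correction terms; verifying that $n_{\epsilon_l}w_l$ produces $+1$ rather than $-1$ in the $\pm l$ positions, and that the coroot computations yield precisely the claimed entries, is the delicate part and is exactly what pins down the square versus non-square dichotomy underlying parts (2) and (3).
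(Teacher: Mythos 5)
Your proposal is correct and follows essentially the same route as the paper: both build the diagonal elements as $h_r(\lambda)=n_r(\lambda)n_r(1)^{-1}$ with $n_r(t)=x_r(t)x_{-r}(-t^{-1})x_r(t)$, using the long root $e_{l,-l}$ for $C_l$ (all $\lambda$), the product of the torus elements for $\epsilon_{l-1}\pm\epsilon_l$ for $D_l$ (squares only), and $n_{\epsilon_l}w_l$ for $\diag(-1,1,\ldots,1)$ in $B_l$. The only cosmetic difference is that you realize the square part of (3) directly from the short root $\epsilon_l$ via $\langle\epsilon_l,\epsilon_l^\vee\rangle=2$, where the paper leaves that case to the embedded $D_l$ computation; both are fine.
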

\begin{proof}
In the case of $\text{Sp}(2l,q)$, we compute $w_{l,-l}(t)=(I+te_{l,-l})(I-t^{-1}e_{-l,l})(I+te_{l,-l}) = I-e_{l,l}-e_{-l,-l}+te_{l,-l}-t^{-1}e_{-l,l}$ and then compute $h_l(\lambda)=w_{l,-l}(\lambda)w_{l,-l}(-1)$ which is the required element.

In the case of $\text{O}(2l,q)$, we compute $w_{l-1,-l}(t)=(I+te_{l-1,-l}-te_{l,-(l-1)})(I+t^{-1}e_{-(l-1),l}-t^{-1}e_{-l,l-1})(I+te_{l-1,-l}-te_{l,-(l-1)})=I+t^{-1}e_{-(l-1),l}-e_{-(l-1),-(l-1)}-t^{-1}e_{-l,l-1}-e_{-l,-l}+te_{l-1,-l}-e_{l-1,l-1}-te_{l,-(l-1)}-e_{l,l}$ and \[h_{l-1,-l}(t)=w_{l-1,l}(t)w_{l-1,l}(-1)=\diag(\underbrace{1,\ldots,1,t,t}_l,\underbrace{1\ldots,1,t^{-1},t^{-1}}_l)\]. Similarly we compute
$\sigma_{l-1,l}(t)=(I+te_{l-1,l}+te_{-l,-(l-1)})(I-t^{-1}e_{l,l-1}-t^{-1}e_{-(l-1),-l})(I+te_{l-1,l}+te_{-l,-(l-1)})$ and $h_{l-1,l}(t)=\sigma_{l-1,l}(t)\sigma_{l-1,l}(-1)=\diag(\underbrace{1,\ldots,1,t,t^{-1}}_l,\underbrace{1,\ldots,1,t^{-1},t}_l)$.

In the matrix form:
\begin{eqnarray*}
w_{2,-3}(t) &=&\begin{pmatrix}1&&&&&\\ &1&&&&t\\&&1&&-t&\\&&&1&&\\&&&&1&\\&&&&&1\end{pmatrix}\begin{pmatrix}1&&&&&\\ &1&&&&\\&&1&&&\\&&&1&&\\&&t^{-1}&&1&\\&-t^{-1}&&&&1\end{pmatrix}\begin{pmatrix}1&&&&&\\ &1&&&&t\\&&1&&-t&\\&&&1&&\\&&&&1&\\&&&&&1\end{pmatrix}\\
&=& \begin{pmatrix}1&&&&&\\ &0&&&&t\\&&0&&-t&\\&&&1&&\\&&t^{-1}&&0&\\&-t^{-1}&&&&0\end{pmatrix}.
\end{eqnarray*}
and 
\begin{eqnarray*}
h_{2,-3}(t)=w_{2,-3}(t)w_{2,-3}(-1)&=& \begin{pmatrix}1&&&&&\\ &0&&&&t\\&&0&&-t&\\&&&1&&\\&&t^{-1}&&0&\\&-t^{-1}&&&&0\end{pmatrix}\begin{pmatrix}1&&&&&\\ &0&&&&-1\\&&0&&1&\\&&&1&&\\&&-1&&0&\\&1&&&&0\end{pmatrix}\\
&=&\begin{pmatrix}1&&&&&\\ &t&&&&\\&&t&&&\\&&&1&&\\&&&&t^{-1}&\\&&&&&t^{-1}\end{pmatrix}.
\end{eqnarray*}
Furthermore, 

$\sigma_{23}(t)=\begin{pmatrix}1&&&&&\\&0&t&&&\\&-t^{-1}&0&&&\\ &&&1&&\\&&&&0&-t^{-1}\\&&&&t&0\end{pmatrix}$
and 
$h_{23}(t)=\sigma_{23}(t)\sigma_{23}(-1)= \begin{pmatrix}1&&&&&\\&t&&&&\\&&t^{-1}&&&\\ &&&1&&\\&&&&t^{-1}&\\&&&&&t\end{pmatrix} $.
Thus multiplying $h_{l-1,-l}(t)$ and $h_{l-1,l}(t^{-1})$ we get the required result.

In the case of $\text{O}(2l+1,q)$ we compute $w_{l,0}=x_{l,0}(1)x_{0,l}(-1)x_{l,0}(1) = I-e_{-l,-l}-e_{-l,l}-e_{l,l}-2e_{0,0}-e_{l,-l}$ and multiply it with $w_l$ to get the required matrix.
\end{proof}

\begin{lemma}\label{lemmaF}
Let $g=\begin{pmatrix}\alpha&X&*\\ *&A&*\\ *&C&*\end{pmatrix}$ be in $\text{O}(2l+1,q)$.
\begin{enumerate}
\item If $C=\diag(1,\ldots,1,\lambda)$ and $X=0$ then $A$ is of the form $\begin{pmatrix}A_{11}&-\lambda \tra A_{21}\\ A_{21}&a\end{pmatrix}$ with $A_{11}$ skew-symmetric.
\item  If $C=\diag(1,\ldots,1,0,\ldots,0)$ with number of $1$s equal $m<l$ and $X$ has first $m$ entries $0$ then $A$ is of the form $\begin{pmatrix}A_{11}& 0 \\ *&*\end{pmatrix}$ with $A_{11}$ skew-symmetric. 
\end{enumerate}
\end{lemma}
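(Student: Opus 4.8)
The plan is to extract both statements from the single block-equation coming from the central diagonal block of the relation $\tr g\beta g=\beta$, where $\beta$ is the $B_l$-type form $\beta=\left(\begin{smallmatrix}2&0&0\\0&0&I_l\\0&I_l&0\end{smallmatrix}\right)$ of Section~\ref{forms}. First I would write $g$ in the $1+l+l$ block decomposition dictated by the index set $\{0,1,\dots,l,-1,\dots,-l\}$, say $g=\left(\begin{smallmatrix}\alpha&X&Y\\ P&A&Q\\ R&C&S\end{smallmatrix}\right)$, and compute $\beta g=\left(\begin{smallmatrix}2\alpha&2X&2Y\\ R&C&S\\ P&A&Q\end{smallmatrix}\right)$. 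Multiplying on the left by $\tr g$ and reading off the central $l\times l$ block, which must vanish since the corresponding block of $\beta$ is $0$, yields the single identity
\[ 2\,\tr X X+\tr A\,C+\tr C\,A=0. \]
Both parts of the lemma are consequences of this identity together with the hypotheses on $C$ and $X$.

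For the first part, with $X=0$ and $C=\diag(1,\dots,1,\lambda)$ symmetric, the identity reduces to $\tr A\,C+C A=0$, i.e.\ $CA$ is skew-symmetric (using $\tr C=C$ and $\tr{(CA)}=\tr A\,C$). I would then apply Lemma~\ref{lemmaA} with $Y=C$ and $m=l-1$ in its skew-symmetric form to read off that $A=\left(\begin{smallmatrix}A_{11}&-\lambda\tr A_{21}\\ A_{21}&a\end{smallmatrix}\right)$ with $A_{11}$ skew-symmetric, which is exactly the claimed shape.

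For the second part, with $C=\diag(1,\dots,1,0,\dots,0)$ ($m<l$ ones) and $X=(0,\dots,0,X')$ whose first $m$ entries vanish, the key observation is that $\tr X X=\left(\begin{smallmatrix}0&0\\ 0&\tr{X'}X'\end{smallmatrix}\right)$ has vanishing first $m$ rows and columns. Writing $A$ in the matching $m+(l-m)$ block form and computing $\tr A\,C+\tr C\,A$ explicitly, using $C=\left(\begin{smallmatrix}I_m&0\\0&0\end{smallmatrix}\right)=\tr C$, the top-left $m\times m$ block equals $A_{11}+\tr A_{11}$ and the top-right block equals $A_{12}$, while the $\tr X X$ term contributes nothing to either. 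Hence the top-left block of the identity forces $A_{11}$ skew-symmetric and the top-right block forces $A_{12}=0$, giving $A=\left(\begin{smallmatrix}A_{11}&0\\ *&*\end{smallmatrix}\right)$ as desired.

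The computations are all routine $3\times 3$ block bookkeeping; the only place demanding care is the central block of $\tr g\beta g$, where the index-$0$ row contributes the extra rank-one term $2\,\tr X X$ that is absent in the even-dimensional Corollary~\ref{corA}. The hypotheses on $X$, namely that it vanishes entirely in the first part and in its first $m$ coordinates in the second, are designed precisely so that this term does not interfere with the sub-blocks governing the shape of $A$, and I expect isolating and controlling that term to be the only genuine subtlety.
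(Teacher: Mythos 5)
Your proposal is correct and follows essentially the same route as the paper: both extract the central-block identity $2\,\tra XX=-(CA+\tra AC)$ from $\tra g\beta g=\beta$, handle the first case by reducing to $CA$ skew-symmetric and invoking Lemma~\ref{lemmaA} (the paper routes this through Corollary~\ref{corA}, whose proof is exactly that reduction), and handle the second case by observing that the vanishing of the first $m$ entries of $X$ kills the top-left and top-right blocks of $\tra XX$. Your write-up is merely more explicit about the block bookkeeping than the paper's terse justification.
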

\begin{proof} We use the equation $\tra g \beta g=\beta$ and get $2\tra XX= -(CA+\tra AC)$. In the first case $X=0$, so we can use~\ref{corA} to get required form for $A$. In the second case we note that $\tra XX$ has top-left block $0$ and get the required form.
\end{proof}

\begin{lemma}\label{lemmaI}
Let $g=\begin{pmatrix}\alpha&X&Y\\ *&A&*\\ *&0&D\end{pmatrix}$ be in $\text{O}(2l+1,q)$ then $X=0$ and $D=\tra A^{-1}$.
\end{lemma}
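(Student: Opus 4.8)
The plan is to read everything off the single defining relation $\tra g\beta g=\beta$, where $\beta=\left(\begin{smallmatrix}2&0&0\\0&0&I_l\\0&I_l&0\end{smallmatrix}\right)$ is the $B_l$ form and every matrix is partitioned according to the index blocks $\{0\}$, $\{1,\dots,l\}$, $\{-1,\dots,-l\}$ of sizes $1$, $l$, $l$. First I would record the two transparent products. Since left multiplication by $\beta$ scales the top row block by $2$ and interchanges the two lower row blocks, one gets $\beta g=\left(\begin{smallmatrix}2\alpha&2X&2Y\\ \ast&0&D\\ \ast&A&\ast\end{smallmatrix}\right)$; moreover the second row block of $\tra g$ is $(\tra X,\ \tra A,\ 0)$, where the final $0$ is the transpose of the $(3,2)$ block of $g$ and is therefore zero by hypothesis. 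This last point is the whole mechanism of the proof.

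Next I would multiply and extract only the two block entries that matter. The $(2,2)$ block of $\tra g\beta g$ is $\tra X\,(2X)+\tra A\cdot 0+0\cdot A=2\,\tra XX$, and the $(2,3)$ block is $\tra X\,(2Y)+\tra A\,D+0\cdot\ast=2\,\tra XY+\tra AD$. Comparing these against the corresponding blocks of $\beta$, namely $0$ and $I_l$, gives the two identities $2\,\tra XX=0$ and $2\,\tra XY+\tra AD=I_l$.

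From the first identity I would conclude $X=0$: the diagonal entries of $\tra XX$ are $x_1^2,\dots,x_l^2$, so $2\,\tra XX=0$ forces $x_i^2=0$ for each $i$ in odd characteristic, whence $X=0$. Feeding $X=0$ into the second identity kills the term $2\,\tra XY$ and leaves $\tra AD=I_l$; since $A$ and $D$ are square, this makes $\tra A$ invertible with $D=\tra A^{-1}$, as claimed.

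There is no genuine obstacle here; the computation is the odd-dimensional analogue of Lemma~\ref{lemmaB}, and the same bilinear-form bookkeeping already appears in Corollaries~\ref{corA} and~\ref{corB} and in Lemma~\ref{lemmaF}. The only points demanding care are organizational: keeping track of the $0$-indexed row and column (so that $\beta$ carries the entry $2$, which is why the $(2,2)$ equation reads $2\,\tra XX=0$ rather than $\tra XX=0$), and noticing that it is the hypothesis on the $(3,2)$ block of $g$—not the as-yet-unproved conclusion $X=0$—that collapses the $(2,2)$ equation in the first place.
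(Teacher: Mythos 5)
Your proposal is correct and follows the paper's own (very terse) proof exactly: both extract the identities $2\,\tra XX=0$ and $2\,\tra XY+\tra AD=I$ from $\tra g\beta g=\beta$ and conclude $X=0$ (using odd characteristic) and $D=\tra A^{-1}$. You have merely supplied the block bookkeeping that the paper leaves implicit.
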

\begin{proof} We compute $\tra g\beta g=\beta$ and get $2\tra XX=0$ and $2\tra XY+\tra AD=I$. This gives the required result.
\end{proof}

\begin{lemma}\label{lemmaG}
Let $g=\begin{pmatrix}\alpha& 0&Y\\ 0&A&B\\F&0&D\end{pmatrix}$, with $A$ an invertible diagonal matrix, be in $\text{O}(2l+1,q)$ then $\alpha^2=1, F=0=Y$, $D=A^{-1}$ and $\tra DB+\tra BD=0$. 
\end{lemma}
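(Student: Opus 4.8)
The plan is to use the defining relation $\tra g \beta g=\beta$ directly, with $\beta$ the $B_l$ form $\begin{pmatrix}2&0&0\\0&0&I_l\\0&I_l&0\end{pmatrix}$ from Section~\ref{forms}, and to read off every assertion by comparing the two sides in their natural $3\times 3$ block decomposition, exactly as in Lemma~\ref{lemmaF} and Lemma~\ref{lemmaI}. Here the block rows and columns are indexed by $0$, by $\{1,\ldots,l\}$, and by $\{-1,\ldots,-l\}$, which matches the block shape of $g$.

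First I would record $\tra g=\begin{pmatrix}\alpha&0&\tra F\\0&\tra A&0\\\tra Y&\tra B&\tra D\end{pmatrix}$ and note that left multiplication by $\beta$ doubles the $0$-row and interchanges the last two block rows, so that $\beta g=\begin{pmatrix}2\alpha&0&2Y\\F&0&D\\0&A&B\end{pmatrix}$. Multiplying, the product $\tra g\beta g$ takes the block form
$$\begin{pmatrix}2\alpha^2 & \tra F A & 2\alpha Y+\tra F B\\ \tra A F & 0 & \tra A D\\ 2\alpha\tra Y+\tra B F & \tra D A & 2\tra Y Y+\tra B D+\tra D B\end{pmatrix}.$$

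Next I would equate this with $\beta$ block by block and solve. The $(0,0)$ entry gives $2\alpha^2=2$, hence $\alpha^2=1$. The block $\tra A F=0$ together with the invertibility of $A$ forces $F=0$. With $F=0$ the $(0,-)$ block $2\alpha Y+\tra F B=0$ collapses to $2\alpha Y=0$, and since $\alpha$ is a unit (as $\alpha^2=1$) and $2$ is invertible in odd characteristic, this yields $Y=0$. The block $\tra A D=I_l$, combined with $A$ being diagonal so that $\tra A=A$, gives $D=A^{-1}$. Finally, with $Y=0$ the bottom-right block $2\tra Y Y+\tra B D+\tra D B=0$ reduces to $\tra D B+\tra B D=0$, the last assertion.

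The computation is entirely mechanical, so I do not expect a genuine obstacle; the only points needing care are tracking the factor $2$ that the $0$-row of $\beta$ injects into the $(0,0)$ and $(0,-)$ blocks, and invoking odd characteristic (so that $2$ is a unit) precisely when deducing $\alpha^2=1$ and $Y=0$, together with using that $A$ is diagonal to pass from $\tra A D=I_l$ to $D=A^{-1}$.
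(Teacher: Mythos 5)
Your proposal is correct and follows exactly the paper's own argument: compute $\tra g\beta g$ in the $3\times 3$ block decomposition (the paper obtains the same product matrix $\begin{pmatrix}2\alpha^2& \tra F A &2\alpha Y+\tra FB\\ \tra AF &0 &\tra AD\\2\alpha \tra Y +\tra BF& \tra DA & 2\tra YY +\tra DB+\tra BD\end{pmatrix}$) and equate it with $\beta$ block by block. The only difference is that you spell out the deductions ($\alpha$ a unit, $2$ invertible in odd characteristic, $A$ diagonal) which the paper leaves implicit.
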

\begin{proof}
\begin{eqnarray*}
\tra g\beta g &=& \begin{pmatrix}\alpha& 0&\tra F\\ 0&\tra A& 0 \\\tra Y& \tra B&\tra D\end{pmatrix} \begin{pmatrix}2& &\\ &&I\\&I&\end{pmatrix}\begin{pmatrix}\alpha& 0&Y\\ 0&A&B\\F&0&D\end{pmatrix}\\
&=& \begin{pmatrix}2\alpha^2& \tra F A &2\alpha Y+\tra FB\\ \tra AF &0 &\tra AD\\2\alpha \tra Y +\tra BF& \tra DA & 2\tra YY +\tra DB+\tra BD\end{pmatrix}.
\end{eqnarray*}
Equating this with $\beta$ we get the required result.
\end{proof}

\begin{lemma}\label{lemmaH}
Let $g=\begin{pmatrix}\pm 1& 0&0\\ 0&A&B\\0&0&A^{-1}\end{pmatrix}\in \text{O}(2l+1,q)$ where $A=\diag(1,\ldots,1,\lambda)$ is invertible then $B$ is of the form $\begin{pmatrix}B_{11}&\lambda^{-1}\tra B_{21}\\ B_{21} & b \end{pmatrix}$.
\end{lemma}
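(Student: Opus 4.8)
The plan is to imitate the computations in the proofs of Lemma~\ref{lemmaG} and Lemma~\ref{lemmaI}: expand the defining relation $\tra g\beta g=\beta$ in the $1+l+l$ block decomposition adapted to the type $B_l$ form $\beta=\begin{pmatrix}2&0&0\\0&0&I_l\\0&I_l&0\end{pmatrix}$, and then read off the only nontrivial constraint on $B$.

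First I would write $g=\begin{pmatrix}\epsilon&0&0\\0&A&B\\0&0&A^{-1}\end{pmatrix}$ with $\epsilon=\pm1$ and compute $\beta g$ followed by $\tra g(\beta g)$ block by block. Because the first row and column of $g$ (apart from the scalar $\epsilon$) are zero, every block of $\tra g\beta g$ touching the first coordinate is automatically consistent with $\beta$: the $(1,1)$ block equals $2\epsilon^2=2$, and the remaining first-row and first-column blocks vanish. The $(2,2)$ block is $0$ as required, while the $(2,3)$ and $(3,2)$ blocks reduce to $\tra A\,A^{-1}$ and $\tra A^{-1}A$, both equal to $I$ since $A$ is diagonal and hence symmetric.

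The substantive content sits in the $(3,3)$ block, which comes out as $\tra B\,A^{-1}+\tra A^{-1}B$. Using $\tra A^{-1}=A^{-1}$ and equating with the zero $(3,3)$ block of $\beta$, I obtain $A^{-1}B+\tra B\,A^{-1}=0$, that is, $A^{-1}B$ is skew-symmetric. It then remains only to apply Lemma~\ref{lemmaA} in its skew-symmetric case with $Y=A^{-1}=\diag(1,\dots,1,\lambda^{-1})$ and $X=B$ (so that the repeated trailing entry is $\lambda^{-1}$ and the number of $1$'s is $m=l-1$), which forces $B$ into the shape $\begin{pmatrix}B_{11}&\lambda^{-1}\tra B_{21}\\B_{21}&b\end{pmatrix}$ with $B_{11}$ skew-symmetric and $b$ a scalar. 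This is exactly the claimed form.

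There is no real mathematical obstacle here; the only points requiring care are bookkeeping ones. One must track the single trailing entry $\lambda^{-1}$ correctly when invoking Lemma~\ref{lemmaA} (its ``$\lambda$'' is played by $\lambda^{-1}$, and the lower-right block degenerates to the scalar $b$), and one must keep the sign convention of Lemma~\ref{lemmaA} in the skew-symmetric case straight so that the off-diagonal block reads $\lambda^{-1}\tra B_{21}$. Everything else is the same routine block multiplication already carried out for Lemmas~\ref{lemmaG}--\ref{lemmaI}.
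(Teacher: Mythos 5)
Your proposal is correct and follows essentially the same route as the paper: the paper's proof simply cites the block computation already done in Lemma~\ref{lemmaG} (which yields $A^{-1}B+\tra BA^{-1}=0$) and then invokes Corollary~\ref{corA} (itself an application of Lemma~\ref{lemmaA}) to read off the shape of $B$, which is exactly what you do by hand. The only caveat is the one you already flag: Lemma~\ref{lemmaA} in the skew-symmetric case actually gives the off-diagonal block as $-\lambda^{-1}\tra B_{21}$, so the sign in the lemma's statement (and in your final sentence) is off by the paper's own convention, but this is a cosmetic issue shared with the original.
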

\begin{proof}
This follows from the computation in the Lemma~\ref{lemmaG} that $A^{-1}B+\tra BA^{-1}=0$ and Corollary~\ref{corA}.
\end{proof}
%%%%%%%%%%%%%%%%%%%%%%%%%%%%
%%%%%%%%%%%%%%%%%%%%%%%%%%%%%
\subsection{ An algorithm for row-column operations for the groups of Lie type $B_l$}

Here we work with the group $\text{O}(2l+1,q)$. Recall that the basis will be indexed by $0, 1, \ldots, l, -1,\ldots, -l$. The Chevalley generators are described in the Section~\ref{chgenerators}. In general, we have four kind of Chevalley generators. For $1\leq i,j\leq l$
\begin{enumerate}
\item[CG1:] $\begin{pmatrix}1&&\\ &R&\\ && \tra R^{-1}\end{pmatrix}$ where $R=I+te_{i,j}$; $i\neq j$.
\item[CG2:] $\begin{pmatrix} 1&&\\ &I& R \\ &&I\end{pmatrix}$ where  $R$ is $t(e_{i,-j}-e_{j,-i})$; $i < j$.
\item[CG3:] $\begin{pmatrix} 1&&\\ &I&  \\ &R &I\end{pmatrix}$ where $R$ is $t(e_{-i,j}-e_{-j,i})$; $i < j$.
\item[CG4:]  $I+t(2e_{i0}-e_{0,-i})-t^2e_{i,-i}, I+t(-2e_{-i,0}+e_{0i})-t^2e_{-i,i}$.
\end{enumerate}

We observe that CG1, CG2 and CG3 generate the subgroup $\text{O}(2l,q)$ of $\text{O}(2l+1,q)$ given by $x\mapsto \begin{pmatrix} 1&\\&x\end{pmatrix}$.
Let $g=\begin{pmatrix}\alpha&X&Y\\ E& A&B\\F&C&D\end{pmatrix}$ be a $(2l+1)\times (2l+1)$ matrix where $A,B,C,D$ are $l\times l$ matrices. The matrices $X=(X_1,X_2,\ldots,X_l)$, $Y=(Y_1,Y_2,\ldots,Y_l)$, $E=\tr(E_1,E_2,\ldots,E_l)$ and $F=\tr(F_1,F_2,\ldots,F_l)$. Let $\alpha\in\mathbb{F}_q$. Let us note the effect of multiplication by elements of one of the types from above.
\begin{eqnarray*}
CG1:
&\begin{pmatrix}1&&\\ &R&\\ && \tra R^{-1}\end{pmatrix}\begin{pmatrix}\alpha & X&Y\\E& A&B\\F& C&D\end{pmatrix}&=\begin{pmatrix}\alpha &X&Y\\ RE&RA&RB \\ \tra R^{-1}F&\tra R^{-1}C & \tra R^{-1} D\end{pmatrix}\\
&\begin{pmatrix}\alpha & X&Y\\ E&A&B\\F&C&D\end{pmatrix}\begin{pmatrix}1&&\\ &R&\\ && \tra R^{-1}\end{pmatrix}&=\begin{pmatrix} \alpha& XR & Y\tra R^{-1}\\ E&AR&B\tra R^{-1} \\ F& CR & D\tra R^{-1}\end{pmatrix}.\\
\end{eqnarray*}
\begin{eqnarray*}
CG2: & \begin{pmatrix}1&&\\ &I&R\\ &&I\end{pmatrix}\begin{pmatrix}\alpha &X&Y\\ E&A&B\\F&C&D\end{pmatrix}&=\begin{pmatrix}\alpha &X&Y\\ E+RF& A+RC&B+RD \\ F& C & D\end{pmatrix} \\
&\begin{pmatrix}\alpha& X&Y\\ E& A&B\\F& C&D\end{pmatrix}\begin{pmatrix}1&&\\ &I&R\\ &&I \end{pmatrix}&=\begin{pmatrix} \alpha& X& XR+Y\\ E&A&AR+B \\ F& C & CR+ D\end{pmatrix}.\\
\end{eqnarray*}
\begin{eqnarray*}
CG3: &\begin{pmatrix}1&&\\ &I&\\&R &I\end{pmatrix}\begin{pmatrix}\alpha & X&Y\\ E&A&B\\F&C&D\end{pmatrix}&=\begin{pmatrix}\alpha&X&Y\\ E&A&B \\ RE+F&RA+ C & RB+D\end{pmatrix}\\ 
&\begin{pmatrix}\alpha& X&Y\\ E&A&B\\F&C&D\end{pmatrix}\begin{pmatrix}1&&\\ &I&\\ &R&I \end{pmatrix}&=\begin{pmatrix}\alpha& X+YR&Y\\E& A+BR&B \\ F&C+DR & D\end{pmatrix}.\\
\end{eqnarray*}
CG4: We only write equations that we need.
\begin{itemize}
\item Let the matrix $g$ has $C=\diag(d_1,\ldots,d_l)$.
$$[(I+te_{0,-i}-2te_{i,0}-t^2e_{i,-i})g]_{0,i}= X_i+td_i $$
$$ [g(I+te_{0,-i}-2te_{i,0}-t^2e_{i,-i})]_{-i,0}= F_i-2td_i.$$
\item Let the matrix $g$ has $A=\diag(d_1,\ldots,d_l)$.
$$[(I+te_{0,i}-2te_{-i,0}-t^2e_{-i,i})g]_{0,i}= X_i-td_i $$
$$[g(I+te_{0,-i}-2te_{i,0}-t^2e_{i,-i})]_{i,0}= E_i-2td_i.$$
\end{itemize}
\subsubsection{The Algorithm}An overview of the algorithm is as follows: 
\begin{itemize}
\item[Step 1:] 
\textbf{Input:} matrix $g=\begin{pmatrix}\alpha&X&Y\\ E&A&B\\F&C&D\end{pmatrix}$ which belongs to $\text{O}(2l+1,q)$; 

\noindent\textbf{Output:} matrix $g_1=\begin{pmatrix}\alpha & X_1 &Y_1 \\ E_1& A_1 &B_1\\ F_1& C_1&D_1\end{pmatrix}$ of one of the following kind:
\begin{enumerate} 
\item[a:] $C_1$ is a diagonal matrix $\diag(1,\ldots, 1, \lambda)$ with $\lambda\neq 0$. 
\item[b:] $C_1$ is a diagonal matrix $\diag(1,\ldots,1,0,\ldots,0)$ with number of $1$s equal to $m$ and $m<l$.
\end{enumerate} 

\noindent\textbf{Justification}: Using CG1 we can do row and column operations on $C$.
 
\item[Step 2:] 
\textbf{Input:} matrix $g_1=\begin{pmatrix}\alpha & X_1 & Y_1 \\ E_1& A_1 &B_1\\ F_1& C_1&D_1 \end{pmatrix}$. 

\noindent\textbf{Output:} matrix $g_2=\begin{pmatrix} \alpha_2 & X_2 & Y_2 \\ E_2& A_2 &B_2\\ F_2& C_2& D_2\end{pmatrix}$ of one of the following kind:    
\begin{enumerate} 
\item[a:] $C_2$ is $\diag(1,1,\ldots, 1, \lambda)$ with $\lambda\neq 0$, $X_2=0=F_2$ and $A_2$ is of the form $\begin{pmatrix} A_{11}&A_{12}\\ A_{21}&a_{22}\end{pmatrix}$ where $A_{11}$ is skew-symmetric of size $l-1$ and $A_{12} = -\lambda \tra A_{21}$. 
\item[b:] $C_2$ is $\diag(1,\ldots,1,0,\ldots,0)$ with number of $1$s equal to $m$; $X_2$ and $F_2$ have first $m$ entries $0$, and $A_2$ is of the form $\begin{pmatrix}A_{11}&0\\ A_{21}& A_{22}\end{pmatrix}$ where $A_{11}$ is an $m\times m$ skew-symmetric.
\end{enumerate}
 
\noindent\textbf{Justification}: Once we have $C_1$ in diagonal form we use CG4 to change $X_1$ and $F_1$ in the required form. Then Lemma~\ref{lemmaF} makes sure that $A_1$ has required form.

\item[Step 3:]
\textbf{Input:} matrix $g_2=\begin{pmatrix} \alpha_2 & X_2 & Y_2 \\ E_2& A_2 &B_2\\ F_2& C_2& D_2\end{pmatrix}$.

\noindent\textbf{Output:} 
\begin{enumerate} 
\item[a:] matrix $g_3=\begin{pmatrix}\alpha_3 & 0 & Y_3 \\ E_3& 0 &B_3\\ 0& C_3& D_3 \end{pmatrix}$ where $C_3$ is $\diag(1,1,\ldots, 1, \lambda)$. 
\item[b:] matrix $g_3=\begin{pmatrix}\alpha_3 & X_3 & Y_3 \\ E_3& A_3 &B_3\\ F_3& C_3& D_3 \end{pmatrix}$ where $C_3$ is $\diag(1,\ldots,1,0,\ldots,0)$ with number of $1$s equal to $m$; $X_3$ and $F_3$ have first $m$ entries $0$, and $A_3$ is of the form $\begin{pmatrix}0&0\\ A_{21}& A_{22}\end{pmatrix}$.
\end{enumerate}

\noindent\textbf{Justification}: Observe the effect of CG2 and the Lemma~\ref{lemmaC} ensures the required form.

\item[Step 4:] \textbf{Input:} $g_3=\begin{pmatrix}\alpha_3 & X_3 & Y_3 \\ E_3& A_3 &B_3\\ F_3& C_3& D_3 \end{pmatrix}$.

\noindent\textbf{Output:} $g_4=\begin{pmatrix}\pm 1&0&0\\ 0&A_4& B_4 \\0&0&A_4^{-1}\end{pmatrix}$  with $A_4$ diagonal matrix $\diag(1,\ldots,1,\lambda)$. 

\noindent\textbf{Justification}: In the first case, interchange rows $i$ and $-i$ for all $1\leq i\leq l$. Now the matrix is in the form so that we can apply Lemma~\ref{lemmaG} and get the required result. In the second case we interchange $i$ with $-i$ for $1\leq i \leq m$. This will make $C_3=0$. Then if needed we use CG1 on $A_3$ to make it diagonal. The Lemma~\ref{lemmaI} ensures that $A_3$ has full rank. Further we can use $CG4$ to make $X_3=0$ and $E_3=0$. The Lemma~\ref{lemmaG} gives the required form.

\item[Step 5:] \textbf{Input:} $g_4=\begin{pmatrix}\pm 1&0&0\\ 0&A_4&B_4\\0&0&A_4^{-1}\end{pmatrix}$  with $A_4=\diag(1,\ldots,1,\lambda)$.

\noindent\textbf{Output:} $g_5=\diag(\pm 1, 1\ldots, 1, \lambda, 1, \ldots, 1,\lambda^{-1})$. 

\noindent\textbf{Justification}: Lemma~\ref{lemmaH} ensures that $B_4$ is of a certain kind. We can use CG2 to make $B_4=0$.

\item[Step 6:] \textbf{Input:} matrix $\diag(\pm 1, 1,\ldots,1,\lambda,1,\ldots,1,\lambda^{-1})$.

\noindent\textbf{Output:} Identity matrix.

\noindent\textbf{Justification}: Write $\lambda$ as $\zeta$ times a square and use the third part of Lemma~\ref{lemmaE} to reduce the matrix to $\diag(1, 1,\ldots,1,\zeta,1,\ldots,1,\zeta^{-1})$  where $\zeta$ is a fixed non-square. Now multiplying with $d(\zeta)$ we get the required result. 
\end{itemize}

%%%%%%%%%%%%%%%%%%%%%%%%%%%%%%%%%%%
\section{Security of the proposed MOR cryptosystem}
The purpose of this section is to show that for a secure MOR cryptosystem over the classical Chevalley groups we have to look at automorphisms that act by conjugation, like the inner automorphisms. There are other automorphisms that also act by conjugation, like the diagonal automorphism and the graph automorphism for $D_l$ type. Then we argue what is the hardness of our security assumptions. 

Let $\phi$ be an automorphism of one of the classical Chevalley groups $G$: $\text{SL}(l+1,q), \text{O}(2l+1,q), \text{Sp}(2l,q),$ or $\text{O}(2l,q)$ of $A_l, B_l, C_l$ or $D_l$ type respectively. 
The automorphisms of these groups are described in Section~\ref{automorphismclassical}. From Theorem~\ref{autoch} we know that $\phi=c_{\chi}\iota\delta \gamma \theta$ where $c_{\chi}$ is a central automorphism, $\iota$ is an inner automorphism, $\delta$ is a diagonal automorphism, $\gamma$ is a graph automorphism and $\theta$ is a field automorphism.

The group of central automorphisms are too small and the field automorphisms reduce to a discrete logarithm in the field $\mathbb{F}_q$. So there is no benefit of using these in a MOR cryptosystem. Also there are not many graph automorphisms in classical Chevalley groups other than the $A_l$ and $D_l$ case. In the $D_l$ case these automorphisms act by conjugation. Recall here that, our automorphisms are presented as action on generators. It is clear~\cite[Section 7]{Ma} that if we can recover the conjugating matrix from the action on generators, then the security is $\mathbb{F}_{q^d}$, if not then the security is $\mathbb{F}_{q^{d^2}}$.

So from these we conclude that for a secure MOR cryptosystem we must look at automorphisms that act by conjugation, like the inner automorphisms. Inner automorphisms form a normal subgroup of $\Aut(G)$ and usually constitute the bulk of automorphisms. If $\phi$ is an inner automorphism, say $\iota_g\colon x\mapsto gxg^{-1}$, we would like to determine the conjugating element $g$. For $A_l$, the special linear group, it was done in~\cite{Ma}. We will follow the steps there for the present situation too. However, before we do that, let us digress briefly to observe that $G\rightarrow \text{Inn}(G)$ 
given by $g\mapsto \iota_g$ is a surjective group homomorphism. Thus if $G$ is generated by $g_1,g_2,\ldots, g_s$ then $\text{Inn}(G)$ is
generated by $\iota_{g_1},\ldots, \iota_{g_s}$. Let $\phi\in \text{Inn}(G)$. If we can find $g_j, j=1,2,\ldots,r$, generators,
such that $\phi=\prod\limits_{j=1}^r \iota_{g_j}$ then $\phi =\iota_g$ where $g=\prod\limits_{j=1}^r g_j$. This implies that our problem is 
equivalent to solving the word problem in $\text{Inn}(G)$. Note that solving word problem depends on how the group is represented and it is not invariant under group homomorphisms. Thus the algorithm described earlier to solve the word problem in the classical Chevalley groups does not help us in the present case.

\subsection{Reduction of security}

In this subsection, we show that for $A_l$ and $C_l$ case, the security of the MOR cryptosystem is the hardness of the discrete logarithm problem in $\mathbb{F}_{q^d}$. This is the same as saying that we can find the conjugating matrix up to a scalar multiple. We further show that the method that works for $A_l$ and $C_l$ does not work for $B_l$ and $D_l$. Let $\phi$ be an automorphism that works by conjugation, i.e., $\phi=\iota_g$ for some $g$ and we try to determine $g$.

{\bf Step 1:}  The automorphism $\phi$ is presented as action on generators $x_r(t)=I+te_r$ except $CG4$ in $B_l$ type. Thus $\phi(x_r(t))=g(I+te_r)g^{-1}=I+tge_rg^{-1}$ where $r\in \Phi$. This implies 
that we know $ge_rg^{-1}$ for all $r\in \Phi$. We first claim that we can determine $N:=gD$ where $D$ is sparse, in fact, diagonal in the case of $A_l$ and $C_l$ type.

In the case of $A_l$, write $g=[G_1,\ldots,G_i,\ldots,G_{l+1}]$, where $G_i$ are column vectors of $g$. Then $ge_{i,j}=\left[G_1,\ldots,G_{l+1}\right]e_{i,j}=[0,\ldots,0,G_i,0\ldots,0]$ where $G_i$ is at the $j\textsuperscript{th}$ place.  Multiplying this with $g^{-1}$ on the right, i. e.,  computing $ge_{i,j}g^{-1}$ determines $G_i$ up to a scalar multiple, say $d_i$. Thus, we know $N= gD$ where $D=\diag(d_1,\ldots,d_{l+1})$.

For the $C_l$ type we do the similar computation with the generators $I+te_{i,-i}$ and $I+te_{-i,i}$. 
Write $g$ in the column form as $\left[G_1,\ldots G_l, G_{-1},\ldots, G_{-l}\right]$. 
Now, 
\begin{enumerate}
\item $\left[G_1,\ldots G_l, G_{-1},\ldots, G_{-l}\right]e_{i,-i}=\left[0,\ldots,0,G_{i},0,\ldots,0\right] $
where $G_{i}$ is at $-i\textsuperscript{th}$ place. Multiplying this further with $g^{-1}$ gives us scalar multiple of $G_i$, say $d_i$.
\item $\left[G_1,\ldots G_l, G_{-1},\ldots, G_{-l}\right]e_{-i,i}=\left[0,\ldots,0,G_{-i},0,\ldots,0\right] $
where $G_{-i}$ is at $i\textsuperscript{th}$ place. Multiplying this with $g^{-1}$ gives us scalar multiple of $G_{-i}$, say $d_{-i}$.
\end{enumerate}
Thus we get $N=gD$ where $D$ is a diagonal matrix $\diag(d_1,\ldots, d_l,d_{-1},\ldots,d_{-l})$.

For $D_l$ type, write $g=\left[G_1,\ldots G_l, G_{-1},\ldots, G_{-l}\right]$. Now computing $ge_rg^{-1}$ gives the following equations:
\begin{enumerate}
\item $\left[G_1,\ldots G_l, G_{-1},\ldots, G_{-l}\right](e_{i,j}-e_{-j,-i})g^{-1}=\left[0,\ldots,0,G_i,0\ldots,0,G_{-j},0,\ldots\right]g^{-1} $
where $G_{i}$ is at $j\textsuperscript{th}$ place and $G_{-j}$ is at $-i\textsuperscript{th}$ place. This gives us linear combination of the columns $G_i$ and $G_{-j}$.
\item $\left[G_1,\ldots G_l, G_{-1},\ldots, G_{-l}\right](e_{i,-j} - e_{j,-i})g^{-1}=\left[0,\ldots,0,G_i,0\ldots,0,G_{j},0,\ldots\right]g^{-1}$
where $G_i$ is at $-j\textsuperscript{th}$ place and $G_j$ is at $-i\textsuperscript{th}$ place. This will give us linear combination of the columns $G_i$ and $G_j$.
\item $\left[G_1,\ldots G_l, G_{-1},\ldots, G_{-l}\right](e_{-i,j}- e_{-j,i})g^{-1}=\left[0,\ldots,0,G_{-i},0\ldots,0,G_{-j},0,\ldots\right]g^{-1}$
where $G_{-i}$ is at $j\textsuperscript{th}$ place and $G_{-j}$ is at $i\textsuperscript{th}$ place. This will give us linear combination of the columns $G_{-i}$ and $G_{-j}$.
\end{enumerate}
Thus we get $N=gD$ where $D$ is of the form $\begin{pmatrix}  W&X\\Y&Z \end{pmatrix}$
with $W$ a diagonal matrix, $Y$ anti-diagonal, $X$ has first column nonzero and $Z$ has the last column nonzero. This is not a diagonal matrix. 
One can do a similar computation for $B_l$ type.

{\bf Step 2:}
Now we compute $N^{-1}\phi(x_r(t))N=D^{-1}g^{-1}(gx_r(t)g^{-1})gD=I+D^{-1}e_rD$ which is equivalent to computing $D^{-1}e_rD$ for $r\in \Phi$. 

In the case of $A_l$ we have $D$ diagonal.
Thus by computing $D^{-1}e_{i,j}D$ we determine $d_i^{-1}d_j$ for $i\neq j$ and form a matrix $\diag(1,d_2^{-1}d_{1},\ldots,d_l^{-1}d_1)$ and multiply this to $N$ we get $d_1g$. Hence we can determine $g$ up to a scalar matrix.

In the $C_l$ case we can do similar computation as $D$ is diagonal. First compute $D^{-1}(e_{i,j}-e_{-j,-i})D$ to get $d_i^{-1}d_j$ and $d_{-i}^{-1}d_{-j}$ for $i\neq j$. 
Now compute $D^{-1}e_{i,-i}D, D^{-1}e_{-i,i}D$ to get $d_id_{-i}^{-1}, d_{-i}d_{i}^{-1}$. We form a matrix 
$$\diag(1,d_2^{-1}d_1,\ldots,d_l^{-1}d_1, d_{-1}^{-1}d_{-2}.d_{-2}^{-1}d_2.d_2^{-1}d_1,\ldots,d_{-l}^{-1}d_{-1}.d_{-1}^{-1}d_1)$$
and multiply it to $N=gD$ to get $d_1g$. Thus we can determine $g$ up to a scalar multiple and then the attack follows~\cite[Section 7.1.1]{Ma}. 

However in the case of $B_l$ and $D_l$ the matrix $D$ is not a diagonal matrix and the above method to determine $g$ does not work.
%%%%%%%%%%%%%%%%%%%%%%%%%%%
\section{Conclusion}
This section is similar to~\cite[Section 8]{Ma}. An useful public-key cryptosystem is a delicate dance between speed and the security. So one must talk about speed along with security. As we said in the introduction, this study was to find the embedding degree for the symplectic and orthogonal groups over finite fields of odd characteristic. So we will be somewhat brief with implementation details.

The implementation that we have in mind uses the row-column operations. Let $\langle g_1,g_2,\ldots,g_s\rangle$ be a set of generators for the orthogonal or symplectic group as described before. As is the custom with a MOR cryptosystem, the automorphisms $\phi$ and $\phi^m$ are presented as action on generators, i.e., we have $\phi(g_i)$ and $\phi^m(g_i)$ as matrices for $i=1,2,\ldots,s$.

To encrypt a message in this MOR cryptosystem, we compute $\phi^r$. We do that by \emph{square-and-multiply} algorithm. For this implementation, squaring and multiplying is almost the same. So we will refer to both squaring and multiplication as multiplication. Note that multiplication is composing of automorphisms.

The implementation that we describe in this paper, can work in parallel. Each instance computes $\pi^r(g_i)$ for $i=1,2,\ldots,s$. First thing that we do is write the matrix of $\phi(g_i)$ as a word in generators. So essentially the map $\phi$ becomes a map $g_i\mapsto w_i$ where $w_i$ is a word in generators of some fixed length. Then multiplication becomes essentially a replacement, replace all instances of $g_i$ by $w_i$. This can be done very fast. However, the length of the replaced word can become very large. The obvious question is, how soon are we going to write this word as a matrix. This is a difficult question to answer at this stage and depends on available computational resources.

Once we decide how often we change back to matrices, how are we going to change back to matrices? There can be a fairly easy \emph{time-memory} trade-offs.  Write all words up to a fixed length and the corresponding matrix as a pre-computed table and use this table to compute the matrices. Once we have matrices, we can multiply them together to generate the final output.
If writing all words is impossible, due to resource constraint, write some of it in a table. There are also many obvious relations among the generators of these groups. One can just store and use them. The best strategy for an efficient implementation is yet to be determined. It is clear now that there are many interesting and novel choices. 

The benefits of this MOR cryptosystem are:
\begin{description}
\item This can be implemented in parallel easily.
\item This implementation doesn't depend on the size of the characteristic of the field. This is an important property in light of Joux's recent improvement of the index-calculus attacks~\cite{joux}.
\end{description}

There is one issue with this MOR cryptosystem, the key-size is large. For parameters and complexity analysis of this cryptosystem, we refer to~\cite[Section 8]{Ma}.
\subsection{Further Research} We conclude this paper with two open directions for further research.
\begin{description}
\item What is the most efficient strategy to implement the MOR cryptosystem on Orthogonal and Symplectic groups that we described earlier?
\item What is the security for the twisted groups?
\end{description}

%\nocite{*}
\bibliographystyle{amsplain}
\bibliography{paper}
\end{document}